\newtheorem{theorem}{Theorem}
\newtheorem{lemma}{Lemma}
\newtheorem{proposition}{Proposition}
\newtheorem{corollary}{Corollary}
\theoremstyle{definition}
\newtheorem{definition}{Definition}
\theoremstyle{remark}
\newtheorem{remark}{Remark}
\newtheorem{example}{Example}
\title{Locally parabolic subgroups in Coxeter groups of arbitrary ranks}
\author{Koji Nuida}
\date{Research Center for Information Security (RCIS), National Institute of Advanced Industrial Science and Technology (AIST), AIST Tsukuba Central 2, 1-1-1 Umezono, Tsukuba, Ibaraki 305-8568, Japan \\ \url{k.nuida@aist.go.jp}}
\begin{document}

\maketitle

\begin{abstract}
Despite the significance of the notion of parabolic closures in Coxeter groups of finite ranks, the parabolic closure is not guaranteed to exist as a parabolic subgroup in a general case.
In this paper, first we give a concrete example to clarify that the parabolic closure of even an irreducible reflection subgroup of countable rank does not necessarily exist as a parabolic subgroup.
Then we propose a generalized notion of \lq\lq locally parabolic closure'' by introducing a notion of \lq\lq locally parabolic subgroups'', which involves parabolic ones as a special case, and prove that the locally parabolic closure always exists as a locally parabolic subgroup.
It is a subgroup of parabolic closure, and we give another example to show that the inclusion may be strict in general.
Our result suggests that locally parabolic closure has more natural properties and provides more information than parabolic closure.
We also give a result on maximal locally finite, locally parabolic subgroups in Coxeter groups, which generalizes a similar well-known fact on maximal finite parabolic subgroups.
\end{abstract}

\section{Introduction}
\label{sec:intro}

For a Coxeter group $W$, a parabolic subgroup is a subgroup that is conjugate to some subgroup generated by a subset of the canonical generating set, and the parabolic closure of a subset $X$ is the intersection of all parabolic subgroups of $W$ that contain $X$.
The notion of parabolic closure has played invaluable roles in the group-theoretic studies of Coxeter groups, which are especially getting active in recent years; for example, the conjugacy problem (e.g., \cite{Kra94}) and the isomorphism problem (e.g., \cite{Nui06}) for Coxeter groups.
The significance of parabolic closure in those preceding works depends on the fact that the parabolic closure itself forms a parabolic subgroup again (hence it is the unique minimal parabolic subgroup containing a given subset), for special cases considered in those works; in particular, for cases of $W$ being finitely generated (see e.g., Franzsen and Howlett \cite[Corollary 7]{FH01} for a proof; while recently Qi \cite{Qi07} re-discovered this fact).
However, it is not guaranteed that this desirable property of parabolic closure holds for a general case.
On the other hand, a well-known fact following Franzsen and Howlett \cite[Lemma 8]{FH01} states that each finite subgroup of a finitely generated Coxeter group $W$ is contained in a maximal finite subgroup of $W$, which is parabolic (see Proposition \ref{prop:Coxeter_maximal_finite_subgroup} below).
This property has also played essential roles in preceding studies of the isomorphism problem for finitely generated Coxeter groups (e.g., \cite{FH01}).
However, similarly to the above case of parabolic closure, this property does not necessarily hold for a general case (indeed, in the inductive limit $S_{\infty} = \bigcup_{n \geq 1} S_n$ of finite symmetric groups, which is a Coxeter group of countably infinite rank, there exists no maximal finite subgroup).
These issues have caused difficulty of generalizing the above-mentioned preceding works on finitely generated Coxeter groups to the case of arbitrary Coxeter groups.

In this paper, first we clarify, by giving a concrete example (Example \ref{ex:counterexample}), that the above-mentioned desirable property of parabolic closure does not hold for arbitrary Coxeter groups, even for restricted cases of parabolic closures of irreducible reflection subgroups of countably infinite ranks (to the author's best knowledge, such a non-trivial counterexample has not appeared in the literature).
Then the aim of this paper is to resolve the lack of the above-mentioned desirable properties in arbitrary Coxeter groups, by generalizing the original notions of parabolic subgroups and parabolic closures.

For the purpose, we define a \lq\lq locally parabolic subgroup'' of a Coxeter group $W$ to be a reflection subgroup $G$ such that each finite subset of the canonical generating set of $G$ (which is determined by the independent results of Deodhar \cite{Deo89} and Dyer \cite{Dye90}) is conjugate to a subset of the canonical generating set of $W$ (Definition \ref{defn:locally_parabolic}).
Then we define the \lq\lq locally parabolic closure'' of a subset $X$ of $W$ to be the intersection of all locally parabolic subgroups that contain $X$.
The notion of locally parabolic subgroups involves parabolic subgroups as a special case.
On the other hand, locally parabolic closures coincide with parabolic closures for certain special cases (see Lemma \ref{lem:two_closures}), but in general these two closures are different (see Example \ref{ex:two_closures_differ}).
One of our main theorems (Theorem \ref{thm:intersection_locally_parabolic}) shows that, in contrast to the case of parabolic closure, the locally parabolic closure is always a locally parabolic subgroup again.
Hence the notion of locally parabolic closures possesses a more desirable property than parabolic closures.
Moreover, another main theorem (Theorem \ref{thm:maximal_locally_finite_subgroup}) shows that each locally finite subgroup of a Coxeter group $W$ is contained in a maximal locally finite subgroup of $W$, which is locally parabolic.
In contrast to the above-mentioned fact on maximal finite subgroups where the type of the underlying Coxeter group $W$ is restricted, this theorem holds for arbitrary Coxeter groups.
(In fact, a technical assumption that the canonical generating set of $W$ can be well-ordered is put in order to avoid a use of Axiom of Choice, but this assumption can be frequently satisfied in practical situations; e.g., in the case of countable ranks.)
The generality of our results would enable us to extend the group-theoretic studies of Coxeter groups to infinite rank cases, which will be a future research topic.

Finally, we give a brief remark on the importance of Coxeter groups of infinite ranks.
It is known that, even if a Coxeter group $W$ has finite rank, a subgroup of $W$ which also admits Coxeter group structure may have an infinite rank.
An example is found in a semidirect factor of the normalizer \cite{BH99} or of the centralizer \cite{Nui11} of a parabolic subgroup in $W$.
Hence Coxeter groups of infinite ranks may appear in a natural context of a study of Coxeter groups of finite ranks, therefore the properties of Coxeter groups of infinite ranks are still worthy to investigate even when our interest is restricted to the case of finite ranks.

This paper is organized as follows.
Section \ref{sec:Coxeter_group} summarizes basic definitions and facts about Coxeter groups.
We also present some auxiliary properties there.
In Section \ref{sec:intersection_parabolic}, we revisit the parabolic closures in Coxeter groups, and give the above-mentioned counterexample to show that the parabolic closure of a subset does not necessarily exist as a parabolic subgroup.
In Section \ref{sec:parabolic_closure}, we introduce the notions of locally parabolic subgroups and locally parabolic closures, and prove that locally parabolic closures always exist as locally parabolic subgroups.
We also give an example that locally parabolic closures and parabolic closures are different in general.
Finally, in Section \ref{sec:locally_finite}, we show that each locally finite subgroup of a Coxeter group is always contained in a maximal locally finite subgroup, which is locally parabolic.

\section{Coxeter groups}
\label{sec:Coxeter_group}

In this section, we summarize some basic definitions and facts about Coxeter groups; we refer to the book \cite{Hum_book} for those not stated explicitly in this paper.
We also present some further properties.
A pair $(W,S)$ of a group $W$ and its generating set $S$ is called a {\em Coxeter system} if $W$ admits the following presentation:
\begin{equation}
W = \langle S \mid (st)^{m(s,t)} = 1 \mbox{ for } s,t \in S \mbox{ such that } m(s,t) < \infty \rangle \enspace,
\end{equation}
where the indices $m(s,t)$ satisfy that $m(s,t) = m(t,s) \in \{1,2,\dots\} \cup \{\infty\}$, and $m(s,t) = 1$ if and only if $s = t$.
In this case, $W$ is called a {\em Coxeter group}.
Throughout this paper, $(W,S)$ denotes a Coxeter system unless otherwise specified.
Although several notions regarding Coxeter groups $W$ (e.g., reflections and parabolic subgroups) in fact depend on the choice of the generating set $S$, we often leave the $S$ implicit provided it is clear from the context.
We emphasize that the {\em rank} $|S|$ of $W$ is not restricted to be finite in this paper.
Let $V$ denote the {\em geometric representation space} of $W$ with bilinear form $\langle \cdot,\cdot \rangle$, and let $\Pi = \{\alpha_s \mid s \in S\}$ be its canonical $\mathbb{R}$-basis.
For an element $v$ of $V$, let $\mathrm{supp}(v)$ and $\mathrm{supp}_+(v)$ be the sets of all $s \in S$ such that the coefficient of $\alpha_s \in \Pi$ in $v$ is nonzero and is positive, respectively.
Let $\Phi \subseteq V$ denote the corresponding {\em root system}, which decomposes as $\Phi = \Phi^+ \cup \Phi^-$ into the sets of {\em positive roots} and of {\em negative roots}, respectively.
The three conditions $\gamma \in \Phi^+$, $\mathrm{supp}_+(\gamma) \neq \emptyset$ and $\mathrm{supp}(\gamma) = \mathrm{supp}_+(\gamma)$ are equivalent for any root $\gamma \in \Phi$.
For each $\gamma \in \Phi$, let $s_{\gamma}$ denote the corresponding {\em reflection} in $W$.
For each $I \subseteq S$, the subgroup $W_I = \langle I \rangle$ of $W$ generated by $I$ is called a {\em standard parabolic subgroup}, and any conjugate of $W_I$ in $W$ is called a {\em parabolic subgroup}.
We write $\Pi_I = \{\alpha_s \in \Pi \mid s \in I\}$, $\Phi_I = W_I \cdot \Pi_I$, and $\Phi_I^{\pm} = \Phi_I \cap \Phi^{\pm}$, respectively.
Here we emphasize the following easy fact, which will be used later:
\begin{lemma}
\label{lem:finite_subset_in_finite_rank}
Any finite subset of $W$ is contained in a standard parabolic subgroup $W_I$ with $I$ finite.
Similarly, any finitely generated subgroup of $W$ is contained in a standard parabolic subgroup $W_I$ with $I$ finite.
\end{lemma}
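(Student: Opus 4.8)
The plan is to reduce both assertions to the single-element case and then take a finite union. The starting point is simply that, by definition, $S$ generates $W$, so every element $w \in W$ can be written as a finite product $w = s_1 s_2 \cdots s_k$ with each $s_i \in S$. Letting $I_w \subseteq S$ denote the set of generators actually appearing in such an expression, we have $w \in W_{I_w}$ and $|I_w| \le k < \infty$. Note that I do not even need the standard fact that the set of generators occurring in a \emph{reduced} word for $w$ is independent of the chosen reduced expression: the mere existence of one finite expression already exhibits $w$ as an element of a standard parabolic subgroup of finite rank, which is all that is required.

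For the first assertion, given a finite subset $\{w_1, \dots, w_n\} \subseteq W$, I would set $I = I_{w_1} \cup \cdots \cup I_{w_n}$. This is a finite union of finite sets, hence finite, and since $I_{w_j} \subseteq I$ we have $W_{I_{w_j}} \subseteq W_I$, so each $w_j$ lies in $W_I$. Thus the finite subset is contained in the standard parabolic subgroup $W_I$ with $I$ finite.

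For the second assertion, given a finitely generated subgroup $H = \langle w_1, \dots, w_n \rangle$, I would use the very same finite set $I = I_{w_1} \cup \cdots \cup I_{w_n}$. By the previous step each generator $w_j$ of $H$ lies in $W_I$, and since $W_I$ is a subgroup of $W$, it contains every product of the $w_j$ and their inverses; hence $H \subseteq W_I$.

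There is no substantial obstacle here: the only point worth keeping in mind is that, although $S$ itself may be infinite, every individual element of $W$ is by definition a finite word in $S$ and therefore involves only finitely many generators, and a finite union of such finite sets remains finite. The statement would break down only if one attempted to bound an infinite subset or an infinitely generated subgroup, where the union of the $I_w$ need not be finite.
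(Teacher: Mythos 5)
Your proposal is correct and follows essentially the same route as the paper's proof: collect the finitely many generators of $S$ appearing in fixed expressions of the given elements, and for the second assertion apply this to a finite generating set of the subgroup. The paper's proof is just a more condensed statement of exactly this argument.
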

\begin{proof}
For the first part, define the $I \subseteq S$ to be the set of all standard generators $s \in S$ appearing in a fixed expression of some element of the given finite subset of $W$.
For the second part, apply the first part to a finite generating set of the given subgroup.
\end{proof}

Let $G$ be a {\em reflection subgroup} of $W$, i.e., subgroup generated by reflections.
We define $\Phi(G)$ to be the set of all roots $\gamma \in \Phi$ such that $s_{\gamma} \in G$.
Note that $G$ is generated by $\{s_{\gamma} \mid \gamma \in \Phi(G)\}$, $- \Phi(G) = \Phi(G)$, and $w \cdot \Phi(G) = \Phi(G)$ for every $w \in G$.
Moreover, we define $\Pi(G)$ to be the set of all \lq\lq simple roots'' in the \lq\lq root system'' $\Phi(G)$, namely the set of all roots $\gamma \in \Phi(G) \cap \Phi^+$ such that the conditions $\gamma = \sum_{i=1}^{k} c_i \beta_i$, $c_i > 0$ and $\beta_i \in \Phi(G) \cap \Phi^+$ imply $\beta_i = \gamma$ for every $i$.
Put $S(G) = \{s_{\gamma} \mid \gamma \in \Pi(G)\}$.
It was shown by Deodhar \cite{Deo89} and independently by Dyer \cite{Dye90} that $(G,S(G))$ is a Coxeter system.
We regard $S(G)$ as the canonical generating set of the Coxeter group $G$, and we call $|S(G)|$ the {\em rank} of $G$.
The following property is well-known:
\begin{lemma}
\label{lem:root_system_reflection_subgroup}
Let $G$ be a reflection subgroup of $W$.
Then $\Phi(G) = G \cdot \Pi(G)$.
\end{lemma}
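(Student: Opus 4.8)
The plan is to prove the two inclusions separately, with $G\cdot\Pi(G)\subseteq\Phi(G)$ being routine and the reverse inclusion carrying the real content. For the easy direction, I would take $\beta\in\Pi(G)$ and $w\in G$. Since $W$ acts on the root system, $w\beta\in\Phi$, and the standard identity $s_{w\beta}=w\,s_\beta\,w^{-1}$ (valid because $w$ preserves $\langle\cdot,\cdot\rangle$) shows $s_{w\beta}\in G$, as $s_\beta\in S(G)\subseteq G$ and $w\in G$. Hence $w\beta\in\Phi(G)$ by the definition of $\Phi(G)$, giving $G\cdot\Pi(G)\subseteq\Phi(G)$.

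For the reverse inclusion $\Phi(G)\subseteq G\cdot\Pi(G)$, I would first reduce to positive roots: since $\Phi(G)=-\Phi(G)$, $\Pi(G)\subseteq\Phi^+$, and $-\beta=s_\beta\beta$ with $s_\beta\in G$ for $\beta\in\Pi(G)$, the set $G\cdot\Pi(G)$ is stable under $\gamma\mapsto-\gamma$, so it suffices to show every $\gamma\in\Phi(G)\cap\Phi^+$ lies in $G\cdot\Pi(G)$. I would then argue by induction on a suitable complexity measure, reducing a non-simple positive root of $G$ to one of lower complexity. The mechanism is the internal theory of the Coxeter system $(G,S(G))$: by the results of Deodhar and Dyer, $\Phi(G)\cap\Phi^+$ is precisely the intrinsic positive system of $(G,S(G))$, $\Pi(G)$ is its simple system, and the length function $\ell_G$ counts the elements of $\Phi(G)\cap\Phi^+$ that a given $w\in G$ sends into $\Phi^-$. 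Concretely, for $\gamma\in(\Phi(G)\cap\Phi^+)\setminus\Pi(G)$ I expect to locate $\beta\in\Pi(G)$ with $s_\beta\gamma\in\Phi(G)\cap\Phi^+$ of strictly smaller complexity; since $s_\beta\in G$ forces $s_\beta\gamma\in\Phi(G)$ automatically, the inductive hypothesis writes $s_\beta\gamma\in G\cdot\Pi(G)$, whence $\gamma=s_\beta(s_\beta\gamma)\in G\cdot\Pi(G)$. The base case $\gamma\in\Pi(G)$ is immediate.

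An alternative, perhaps cleaner, route for the reverse inclusion is to compare $\Phi(G)$ with the abstract root system $\Phi^{\mathrm{abs}}$ of $(G,S(G))$ inside its own geometric representation $V_G$. The linear map $V_G\to V$ sending each abstract simple root to the corresponding element of $\Pi(G)$ is $G$-equivariant, because the order of $s_\beta s_{\beta'}$ in $G$ is governed by the angle between $\beta,\beta'\in\Pi(G)$ exactly as in the usual geometric representation; this map carries $\Phi^{\mathrm{abs}}=G\cdot\Pi^{\mathrm{abs}}$ onto $G\cdot\Pi(G)$. Given $\gamma\in\Phi(G)$, the reflection $s_\gamma\in G$ corresponds to a unique positive abstract root, whose image is a root of $\Phi$ inducing the same reflection $s_\gamma$, hence equal to $\pm\gamma$; this places $\gamma$ in $G\cdot\Pi(G)$.

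The main obstacle in either route is the identification underlying the Deodhar--Dyer theory, namely that the reflections of the Coxeter system $(G,S(G))$ are exactly the reflections $s_\gamma$ of $W$ that lie in $G$, equivalently that $\Phi(G)\cap\Phi^+$ is the intrinsic positive system of $(G,S(G))$. This compatibility between $W$-positivity and the intrinsic length and positivity of $(G,S(G))$ is precisely what makes the inductive reduction step (or the $G$-equivariance of the comparison map) go through, and it is the point at which I would invoke the cited results of Deodhar \cite{Deo89} and Dyer \cite{Dye90} rather than reprove them.
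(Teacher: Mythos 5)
Your first (inductive) route is essentially correct, but it is genuinely different from the paper's argument, and the two rest on different inputs. The paper does not induct on anything: it applies Deodhar's theorem that every involution in a Coxeter group is a product of mutually commuting reflections to $s_\gamma$ viewed as an involution of the Coxeter system $(G,S(G))$, obtaining $s_\gamma = s_{\gamma_1}\cdots s_{\gamma_k}$ with $\gamma_i \in G\cdot\Pi(G)$ mutually orthogonal; then $s_\gamma\cdot\gamma_i=-\gamma_i$ forces $\gamma_i=\pm\gamma$, hence $k=1$ and $\gamma=\pm\gamma_1\in G\cdot\Pi(G)$. That argument needs only the bare fact that $(G,S(G))$ is a Coxeter system. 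Your induction instead needs the finer Deodhar--Dyer compatibility that $\ell_G$ counts inversions inside $\Phi(G)\cap\Phi^+$, equivalently that for $\beta\in\Pi(G)$ the reflection $s_\beta$ maps $(\Phi(G)\cap\Phi^+)\setminus\{\beta\}$ into $\Phi^+$. Granting that, your reduction does close: set $\mathrm{dp}_G(\gamma)=\min\{\ell_G(w)\mid w\in G,\ w\cdot\gamma\in\Phi^-\}$ (finite, since $s_\gamma\cdot\gamma=-\gamma$); depth $1$ forces $\gamma\in\Pi(G)$, and for depth $d\geq 2$, writing a minimizing $w$ as $w's_\beta$ with $\beta\in\Pi(G)$ and $\ell_G(w')=d-1$ yields $s_\beta\cdot\gamma\in\Phi(G)\cap\Phi^+$ of depth at most $d-1$, so induction applies. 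The trade-off: the paper's proof is shorter and leans on a weaker (and already-quoted) input, while yours is the standard depth argument and is fine \emph{provided} the fact you cite from Deodhar/Dyer is the inversion-counting property, which in Dyer's development is logically prior to the present lemma.

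Two warnings, however. First, your closing paragraph proposes to invoke instead the statement that \lq\lq the reflections of $(G,S(G))$ are exactly the reflections of $W$ lying in $G$.'' That statement is equivalent to the lemma being proven: $\Phi(G)=G\cdot\Pi(G)$ says precisely that every $s_\gamma$ with $\gamma\in\Phi(G)$ is $G$-conjugate to an element of $S(G)$ (two roots defining the same reflection agree up to sign, and $G\cdot\Pi(G)$ is stable under $\gamma\mapsto-\gamma$). Citing it reduces the proof to a one-line translation and is circular in spirit; since the paper includes this proof precisely \lq\lq for completeness,'' you must cite the length/positivity compatibility, not the reflection identification itself. Second, your alternative route via a $G$-equivariant map $V_G\to V$ fails in general: equivariance for the generator $s_\beta$ forces the map to preserve pairings against $\beta$, but for $\beta,\beta'\in\Pi(G)$ with $s_\beta s_{\beta'}$ of infinite order one can have $\langle\beta,\beta'\rangle<-1$ in $V$ (e.g., $\beta=\alpha_r$ and $\beta'=\alpha_s+2\alpha_t$ in the universal Coxeter group on $\{r,s,t\}$, where the pairing is $-3$), whereas the standard geometric representation of $(G,S(G))$ assigns such a pair the value $-1$. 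So the asserted map does not exist, and moreover the step \lq\lq$s_\gamma\in G$ corresponds to a unique positive abstract root'' again presupposes that $s_\gamma$ is a reflection of the abstract system, which is the very point at issue.
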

Here we include a proof of Lemma \ref{lem:root_system_reflection_subgroup} for completeness, which uses the following result proven by Deodhar \cite[Theorem 5.4]{Deo82}:
\begin{lemma}
[Deodhar]
\label{lem:Deodhar_involution}
Let $w \in W$ such that $w^2 = 1$.
Then $w$ is a product of mutually commuting reflections in $W$.
\end{lemma}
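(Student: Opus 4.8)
The plan is to argue by induction on the length $\ell(w)$ of $w$ with respect to $S$, the base case $w = 1$ being the empty product of reflections. So suppose $w \neq 1$ and $w^2 = 1$, and choose $s \in S$ with $\ell(sw) < \ell(w)$. First I would record the key root-theoretic input: since $\ell(sw) < \ell(w)$ and $w = w^{-1}$, the root $\beta := w\alpha_s$ is negative, and because $s$ permutes $\Phi^- \setminus \{-\alpha_s\}$ while sending $-\alpha_s$ to $\alpha_s$, the sign of $s\beta$ splits the argument into two cases according to whether $\beta = -\alpha_s$ or not.

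In the first case, $\beta \neq -\alpha_s$, so $s\beta = (sw)\alpha_s$ is again negative and hence $\ell(sws) = \ell(w) - 2$. Then $sws$ is an involution strictly shorter than $w$, so by the inductive hypothesis $sws = r_1 \cdots r_k$ for mutually commuting reflections $r_i$; conjugating back, $w = (s r_1 s) \cdots (s r_k s)$ expresses $w$ as a product of reflections that still commute pairwise, since conjugation by $s$ is an automorphism of $W$. This case is routine.

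The delicate case is $\beta = -\alpha_s$, i.e.\ $w\alpha_s = -\alpha_s$. Here the standard identity $w s_\gamma w^{-1} = s_{w\gamma}$ gives $wsw = s_{-\alpha_s} = s$, so $s$ and $w$ commute and $w' := sw = ws$ is an involution with $\ell(w') = \ell(w) - 1$. By induction $w' = r_1 \cdots r_k$ with $r_i = s_{\gamma_i}$ pairwise commuting, and I would now have to check that $s$ itself commutes with every $r_i$ --- this is the main obstacle, since a priori $s$ commutes only with the product $w'$, not with its individual factors. To overcome it I would use that commuting reflections have either proportional or orthogonal roots (a short direct computation with the reflection formula shows $\langle \gamma_i, \gamma_j \rangle = 0$ for $i \neq j$), so that the unit-norm roots $\gamma_1, \dots, \gamma_k$ are pairwise orthogonal and therefore linearly independent. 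Since $w'\alpha_s = s(w\alpha_s) = \alpha_s$, the vector $\alpha_s$ is fixed by $w' = \prod_i s_{\gamma_i}$; expanding $0 = \alpha_s - w'\alpha_s = \sum_i 2\langle \alpha_s, \gamma_i \rangle \gamma_i$ and invoking linear independence forces $\langle \alpha_s, \gamma_i \rangle = 0$ for all $i$. Hence $s$ commutes with each $r_i$, and $w = s\, r_1 \cdots r_k$ is the desired product of mutually commuting reflections.

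Thus the whole argument reduces to these two length-comparison cases, and the only substantial work is the orthogonality computation in the second case. I expect that verifying that the commuting property survives the inductive peeling-off of $s$ is the step most likely to hide a subtlety, which is precisely why the eigenvector identity $w'\alpha_s = \alpha_s$, combined with the linear independence of mutually orthogonal unit-norm roots, is the crux of the proof.
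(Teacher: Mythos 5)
Your argument is correct, but note that the paper offers no proof of this lemma to compare against: it is imported wholesale from Deodhar \cite[Theorem 5.4]{Deo82}, so your proof has to stand on its own, and it does. The length induction goes through in both cases. Case 1 is routine, as you say: $(sw)\cdot\alpha_s \in \Phi^-$ gives $\ell(sws) = \ell(w)-2$, and conjugation by $s$ preserves pairwise commutation. In Case 2, the identity $w\cdot\alpha_s = -\alpha_s$ does force $wsw = s$, so $w' = sw$ is an involution of length $\ell(w)-1$, and your eigenvector computation is valid: once the $\gamma_i$ are pairwise orthogonal, one checks $w'\cdot v = v - \sum_i 2\langle v,\gamma_i\rangle\gamma_i$ (all cross terms vanish by orthogonality), so $w'\cdot\alpha_s = \alpha_s$ together with linear independence yields $\langle \alpha_s,\gamma_i\rangle = 0$ for every $i$, hence $s$ commutes with each $r_i$. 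You have correctly identified that the naive induction would stall at exactly this point, and the orthogonality mechanism is the right way past it.

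Two small points should be made explicit to make the write-up airtight. First, the claim $\langle\gamma_i,\gamma_j\rangle = 0$ for $i \neq j$ requires the reflections $r_1,\dots,r_k$ to be pairwise \emph{distinct}: commuting reflections satisfy $s_{\gamma_i}\cdot\gamma_j = \pm\gamma_j$, and the proportional case gives $\gamma_j = \pm\gamma_i$ (since all roots have norm $1$ in the geometric representation), i.e.\ $r_i = r_j$. Distinctness is not part of the inductive statement, but it may be assumed without loss of generality: equal factors can be moved adjacent (everything commutes) and cancelled. Second, since the bilinear form of the geometric representation is in general \emph{not} positive definite, the linear-independence step deserves a word of justification; it does survive, because pairing a vanishing combination $\sum_i c_i\gamma_i = 0$ with $\gamma_j$ uses only the established orthogonality and $\langle\gamma_j,\gamma_j\rangle = 1 \neq 0$ to conclude $c_j = 0$. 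With these two one-line patches the induction closes and your proof is a complete, self-contained substitute for the citation.
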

\begin{proof}
[Proof of Lemma \ref{lem:root_system_reflection_subgroup}]
The non-trivial part is to show that $\gamma \in G \cdot \Pi(G)$ for each $\gamma \in \Phi(G)$.
As $s_{\gamma} \in G$ is an involution in the Coxeter group $G$, Lemma \ref{lem:Deodhar_involution} implies that there are $g_i \in G$ and $s_{\beta_i} \in S(G)$ ($1 \leq i \leq k$) such that $s_{\gamma} = g_1 s_{\beta_1} g_1^{-1} \cdots g_k s_{\beta_k} g_k^{-1}$ and each pair $g_i s_{\beta_i} g_i^{-1}$, $g_j s_{\beta_j} g_j^{-1}$ ($i \neq j$) are distinct and commute with each other.
This implies that $s_{\gamma} = s_{\gamma_1} \cdots s_{\gamma_k}$ and $\gamma_1,\dots,\gamma_k$ are mutually orthogonal, where $\gamma_i = g_i \cdot \beta_i \in G \cdot \Pi(G)$.
Therefore we have $s_{\gamma} \cdot \gamma_i = - \gamma_i$ and $\gamma_i = \pm \gamma$ for every $i$.
This implies that $k = 1$ and $\gamma = \pm \gamma_1 \in G \cdot \Pi(G)$, as desired.
\end{proof}
The following fundamental fact about reflection subgroups will be used below (see e.g., \cite[Theorem 2.3(iv)]{Nui11}):
\begin{lemma}
\label{lem:simple_root_to_negative}
For an element $1 \neq w \in G$ of a reflection subgroup $G$ of $W$ and a $\gamma \in \Pi(G)$, we have $w \cdot \gamma \in \Phi^-$ if and only if the length of $w s_{\gamma} \in G$ with respect to the generating set $S(G)$ of $G$ is shorter than that of $w$.
\end{lemma}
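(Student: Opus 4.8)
The plan is to reduce the statement to the standard length-versus-sign criterion for Coxeter systems, applied to the system $(G,S(G))$ supplied by Deodhar and Dyer, and then to transfer the sign condition from the intrinsic root system of $G$ to the ambient root system $\Phi$. Concretely, since $(G,S(G))$ is itself a Coxeter system, it possesses its own geometric representation, with its simple roots realized as $\Pi(G)$ inside $V$ through the $G$-equivariant identification underlying Lemma \ref{lem:root_system_reflection_subgroup} (so that the intrinsic root system of $G$ is carried bijectively onto $\Phi(G) = G \cdot \Pi(G)$), together with its own positive and negative roots $\Phi(G)^{\pm}$ and its own length function $\ell_{S(G)}$. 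The textbook fact for Coxeter systems then reads, for $1 \neq w \in G$ and $\gamma \in \Pi(G)$,
\[
\ell_{S(G)}(w s_{\gamma}) < \ell_{S(G)}(w) \iff w \cdot \gamma \in \Phi(G)^{-} \enspace,
\]
where $\Phi(G)^{\pm}$ denote the positive and negative roots of $(G,S(G))$; here a root of $G$ is positive exactly when it is a nonnegative combination of the simple roots $\Pi(G)$.

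The key point I would establish is that this intrinsic sign condition agrees with the ambient one, namely that for every $\beta \in \Phi(G)$ one has $\beta \in \Phi^{+}$ if and only if $\beta$ is a positive root of $(G,S(G))$. For the nontrivial direction, suppose $\beta$ is positive in $(G,S(G))$, so $\beta = \sum_{i} c_i \gamma_i$ with $c_i \geq 0$ and $\gamma_i \in \Pi(G) \subseteq \Phi^{+}$; since each $\gamma_i$ satisfies $\mathrm{supp}(\gamma_i) = \mathrm{supp}_+(\gamma_i)$, the combination $\beta$ has all nonnegative coefficients with respect to $\Pi$ and is nonzero, so $\mathrm{supp}_+(\beta) \neq \emptyset$ and $\mathrm{supp}(\beta) = \mathrm{supp}_+(\beta)$, whence $\beta \in \Phi^{+}$ by the equivalence recorded for positive roots. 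The converse follows by applying the same argument to $-\beta$, using $\Phi(G)^{-} = -\Phi(G)^{+}$ and $\Phi^{-} = -\Phi^{+}$. Consequently $\Phi(G)^{-} = \Phi(G) \cap \Phi^{-}$, and since $w \cdot \gamma$ always lies in $\Phi(G) \subseteq \Phi$, the condition $w \cdot \gamma \in \Phi(G)^{-}$ is equivalent to $w \cdot \gamma \in \Phi^{-}$.

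Combining the displayed equivalence with this identification yields the lemma. The main obstacle is precisely the middle step: verifying that the positive system intrinsic to $(G,S(G))$ coincides with $\Phi(G) \cap \Phi^{+}$, so that \emph{negative in $G$} and \emph{negative in $W$} mean the same thing for roots of $G$. Everything else is either the Deodhar--Dyer structure theorem or the standard length criterion for an abstract Coxeter system, so once this compatibility is in hand the conclusion is immediate; alternatively one may simply cite \cite[Theorem 2.3(iv)]{Nui11} for the whole statement.
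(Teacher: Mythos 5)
The paper offers no argument for this lemma at all: it is stated with a pointer to \cite[Theorem 2.3(iv)]{Nui11}, so the fallback in your last sentence coincides exactly with what the paper does. Your primary argument, however, has a genuine gap, and it sits precisely at the step you yourself single out as the key point.

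The gap is the assumed ``$G$-equivariant identification'' of the intrinsic root system of the abstract Coxeter system $(G,S(G))$ with the pair $(\Phi(G),\Pi(G))$ inside $V$. Lemma~\ref{lem:root_system_reflection_subgroup} does not provide this; it is only the set equality $\Phi(G)=G\cdot\Pi(G)$. Moreover, such an identification does not exist in general, because the ambient form restricted to $\Pi(G)$ need not agree with the form of the standard geometric representation of $(G,S(G))$: for distinct $\gamma,\delta\in\Pi(G)$ with $s_\gamma s_\delta$ of infinite order one only knows $\langle\gamma,\delta\rangle\le -1$, and strict inequality occurs. Concretely, in the rank-$3$ Coxeter group with generators $a,b,c$ and all labels $\infty$, the positive roots $\gamma=\alpha_a$ and $\delta=\alpha_c+2\alpha_b$ form the canonical simple system of the (infinite dihedral) reflection subgroup they generate, and $\langle\gamma,\delta\rangle=-3$; here $s_\gamma\cdot\delta=\delta+6\gamma$, whereas equivariance with the standard representation of the infinite dihedral group would force $\delta+2\gamma$, so the linear map sending abstract simple roots to $\gamma,\delta$ is not $G$-equivariant and does not carry abstract roots to roots of $W$. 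Consequently ``positive in $(G,S(G))$'' cannot be read, via the standard representation, as ``nonnegative combination of $\Pi(G)$ in $V$'', and both halves of your key step collapse: the forward direction presupposes exactly this reading, and the converse additionally invokes the dichotomy that every element of $\Phi(G)$ is a nonnegative or nonpositive combination of $\Pi(G)$ --- a genuine theorem that does not follow formally from the definition of $\Pi(G)$ as the indecomposable elements (induction on height fails, since in a decomposition $\beta=\sum_i c_i\beta_i$ with $0<c_i<1$ the $\beta_i$ can be higher than $\beta$). These compatibility facts --- the dichotomy for $(\Phi(G),\Pi(G))$ and the length criterion taken with respect to the \emph{ambient} sign --- are precisely the nontrivial content of the Deodhar--Dyer theory in the form cited, where one works with generalized root bases allowing off-diagonal form values $\le -1$ rather than with the standard realization of $(G,S(G))$. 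As written, your reduction therefore assumes essentially what is to be proved; the only non-circular route in your proposal is the citation of \cite[Theorem 2.3(iv)]{Nui11}, which is what the paper itself relies on.
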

Now we include for completeness a proof of the following well-known basic fact:
\begin{lemma}
\label{lem:roots_for_parabolic}
For any $I \subseteq S$, we have $\Pi(W_I) = \Pi_I$, $S(W_I) = I$, and $\Phi(W_I) = \{\gamma \in \Phi \mid \mathrm{supp}(\gamma) \subseteq I\} = \Phi_I$.
\end{lemma}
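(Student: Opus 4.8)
The plan is to deduce all three equalities from the two statements $\Pi(W_I)=\Pi_I$ and $\Phi_I=\{\gamma\in\Phi\mid\mathrm{supp}(\gamma)\subseteq I\}$. Once $\Pi(W_I)=\Pi_I$ is known, the definition $S(W_I)=\{s_\gamma\mid\gamma\in\Pi(W_I)\}$ gives $S(W_I)=\{s_{\alpha_s}\mid s\in I\}=I$ immediately, and Lemma \ref{lem:root_system_reflection_subgroup} applied to $G=W_I$ yields $\Phi(W_I)=W_I\cdot\Pi(W_I)=W_I\cdot\Pi_I=\Phi_I$. So the whole statement reduces to proving $\Pi(W_I)=\Pi_I$ and the support description of $\Phi_I$. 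Throughout I would write $V_I$ for the subspace of $V$ spanned by $\Pi_I$, and use the standard fact that $\Phi\cap\mathbb{R}\alpha_s=\{\pm\alpha_s\}$ for each $s\in S$, together with the standard positive-root reduction: a non-simple $\gamma\in\Phi^+$ admits a simple reflection $s$ with $\langle\gamma,\alpha_s\rangle>0$ and $\gamma\neq\alpha_s$, so that $s_s\gamma\in\Phi^+$ and $\gamma=s_s\gamma+2\langle\gamma,\alpha_s\rangle\alpha_s$.

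For $\Pi(W_I)=\Pi_I$, the inclusion $\Pi_I\subseteq\Pi(W_I)$ is easy: each $\alpha_s$ ($s\in I$) lies in $\Phi(W_I)\cap\Phi^+$ since $s_{\alpha_s}=s\in W_I$, and a relation $\alpha_s=\sum_i c_i\beta_i$ with $c_i>0$ and $\beta_i\in\Phi(W_I)\cap\Phi^+$ forces, on comparing the coefficients of $\Pi$, each $\beta_i$ to be supported on $\{s\}$, hence $\beta_i=\alpha_s$. For the reverse inclusion I first record that every $\gamma\in\Phi(W_I)$ has $\mathrm{supp}(\gamma)\subseteq I$: since each generator $s\in I$ sends $\alpha_t$ ($t\notin I$) to $\alpha_t$ plus a multiple of $\alpha_s\in V_I$, every $w\in W_I$ acts as the identity on $V/V_I$, and applying this to $w=s_\gamma$ gives $-\gamma\equiv\gamma\pmod{V_I}$, so the part of $\gamma$ supported off $I$ vanishes. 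Now take $\gamma\in\Pi(W_I)$, so $s_\gamma\in W_I$ and $\gamma\in\Phi^+$ with $\mathrm{supp}(\gamma)\subseteq I$; if $\gamma\notin\Pi_I$ then $\gamma$ is non-simple, and the reduction above gives $s$ with $\langle\gamma,\alpha_s\rangle>0$. This forces $s\in\mathrm{supp}(\gamma)\subseteq I$, because a simple root $\alpha_s$ with $s\notin I$ pairs nonpositively with every root supported in $I$; hence $s_s\in W_I$, so $s_{s_s\gamma}=s_s s_\gamma s_s\in W_I$ and $s_s\gamma\in\Phi(W_I)$. Then $\gamma=s_s\gamma+2\langle\gamma,\alpha_s\rangle\alpha_s$ expresses $\gamma$ as a positive combination of two elements of $\Phi(W_I)\cap\Phi^+$ distinct from $\gamma$, contradicting $\gamma\in\Pi(W_I)$. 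Thus $\Pi(W_I)=\Pi_I$.

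It remains to show $\Phi_I=\{\gamma\in\Phi\mid\mathrm{supp}(\gamma)\subseteq I\}$. The inclusion $\subseteq$ holds because $W_I$ preserves $V_I$, so $\Phi_I=W_I\cdot\Pi_I\subseteq V_I\cap\Phi$, and vectors of $V_I$ are supported in $I$. The reverse inclusion is the one substantive point, and I expect it to be the main obstacle: given $\gamma\in\Phi^+$ with $\mathrm{supp}(\gamma)\subseteq I$, I would argue by induction on the depth of $\gamma$, that is, the least length of $w\in W$ with $w\gamma\in\Phi^-$. If the depth is $1$ then $\gamma\in\Pi$, so $\gamma=\alpha_s$ with $s\in I$ and $\gamma\in\Pi_I\subseteq\Phi_I$; otherwise the positive-root reduction supplies $s$ with $\langle\gamma,\alpha_s\rangle>0$ and smaller depth for $s_s\gamma$. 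As before $\langle\gamma,\alpha_s\rangle>0$ forces $s\in\mathrm{supp}(\gamma)\subseteq I$, so $s_s\in W_I$ and $s_s\gamma$ is again a positive root with support in $\mathrm{supp}(\gamma)\cup\{s\}\subseteq I$ of strictly smaller depth; by induction $s_s\gamma\in\Phi_I$, and applying $s_s\in W_I$ gives $\gamma=s_s(s_s\gamma)\in W_I\cdot\Phi_I=\Phi_I$. The crux is precisely keeping every reduction step inside $W_I$, which the orthogonality remark guarantees; the remaining verifications (the basic properties of depth and the triviality of the $W_I$-action on $V/V_I$) are routine.
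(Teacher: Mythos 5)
Your proposal is correct, but it diverges from the paper's proof at both places where real work happens, so the comparison is worth recording. For $\Pi(W_I)=\Pi_I$: your observation that $W_I$ acts trivially on $V/V_I$ (hence $s_\gamma\in W_I$ forces $\mathrm{supp}(\gamma)\subseteq I$) is the same point the paper makes by noting that elements of $W_I$ cannot alter coefficients outside $\Pi_I$; but from there the paper gets $\Pi(W_I)\subseteq\Pi_I$ for free, since any $\gamma\in\Phi(W_I)\cap\Phi^+$ is already a positive combination of the roots $\alpha_t$ with $t\in\mathrm{supp}(\gamma)\subseteq I$, each of which lies in $\Phi(W_I)\cap\Phi^+$, so simplicity of $\gamma$ forces $\gamma=\alpha_t\in\Pi_I$; your detour through the reduction $\gamma=s\cdot\gamma+2\langle\gamma,\alpha_s\rangle\alpha_s$ proves the same inclusion correctly but with more machinery than needed. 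The genuinely different step is the hard inclusion $\{\gamma\in\Phi\mid\mathrm{supp}(\gamma)\subseteq I\}\subseteq\Phi(W_I)$: the paper argues by contradiction on a reduced word $s_\gamma=s_1\cdots s_n$ in $W$, taking the last letter $s_k\notin I$ and invoking Lemma \ref{lem:simple_root_to_negative} to force a root whose $\alpha_{s_k}$-coefficient (which neither $s_\gamma$ nor an element of $W_I$ can change) to become negative, an impossibility; you instead run an induction on the depth of the positive root, using that some $s$ pairs positively with $\gamma$, that such an $s$ must lie in $\mathrm{supp}(\gamma)\subseteq I$ because distinct simple roots pair nonpositively, and that $s\cdot\gamma$ then has strictly smaller depth, so the descent never leaves $W_I$ and exhibits $\gamma\in W_I\cdot\Pi_I$ directly. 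Both arguments are sound: the paper's is shorter because it reuses Lemma \ref{lem:simple_root_to_negative}, already stated; yours avoids that lemma entirely but imports the standard depth facts (essentially \cite[Lemma 4.6.2]{BB_book}), and has the mild advantage of being a constructive descent rather than a proof by contradiction. One cosmetic omission: your induction treats only positive roots, but negative ones follow at once since $\mathrm{supp}(-\gamma)=\mathrm{supp}(\gamma)$ and $\Phi_I=-\Phi_I$.
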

\begin{proof}
First we have $\Phi(W_I) \subseteq \{\gamma \in \Phi \mid \mathrm{supp}(\gamma) \subseteq I\}$, as no element of $W_I = \langle I \rangle$ maps a positive root $\gamma$ such that $\mathrm{supp}(\gamma) \not\subseteq I$ to a negative root.
It then follows immediately that $\Pi(W_I) = \Pi_I$ and $S(W_I) = I$, therefore we have $\Phi(W_I) = \Phi_I$ by Lemma \ref{lem:root_system_reflection_subgroup}.
Finally, assume for contrary that $\gamma \in \Phi$ and $\mathrm{supp}(\gamma) \subseteq I$ but $s_{\gamma} \not\in W_I$.
Let $s_{\gamma} = s_1s_2 \cdots s_n$ be a shortest expression of $s_{\gamma} \in W$ with $s_1,\dots,s_n \in S$.
Take the maximal index $k \leq n$ such that $s_k \not\in I$ (which exists by the assumption on $\gamma$) and put $u = s_{k+1} \cdots s_n \in W_I$.
Then we have $s_{\gamma} u^{-1} \cdot \alpha_{s_k} \in \Phi^-$ by Lemma \ref{lem:simple_root_to_negative}.
However, this is impossible, as the actions by $s_{\gamma}$ and by $u^{-1}$ do not affect the coefficient of $\alpha_{s_k} \not\in \Pi_I$.
Hence we have $\Phi(W_I) \supseteq \{\gamma \in \Phi \mid \mathrm{supp}(\gamma) \subseteq I\}$, concluding the proof of Lemma \ref{lem:roots_for_parabolic}.
\end{proof}

Note that each parabolic subgroup $w W_I w^{-1}$ of $W$ is also a reflection subgroup, hence it has a generating set $S(w W_I w^{-1})$.
Now we show the following relation:
\begin{lemma}
\label{lem:canonical_generator_parabolic}
Let $G = w W_I w^{-1}$ be a parabolic subgroup of $W$ with $w \in W$ and $I \subseteq S$.
Then we have $\Pi(G) = w^I \cdot \Pi_I$ and $S(G) = w^I I (w^I)^{-1}$, where $w^I$ denotes the unique shortest element in the coset $w W_I$.
\end{lemma}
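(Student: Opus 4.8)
The plan is to reduce everything to the already-established description of the root system of a standard parabolic subgroup (Lemma \ref{lem:roots_for_parabolic}) and then transport it by $w^I$, exploiting that the minimal coset representative is precisely the element of $wW_I$ that keeps positivity under control. First I would compute $\Phi(G)$. Since conjugation of reflections satisfies $w s_{\gamma} w^{-1} = s_{w \cdot \gamma}$, we have $s_{\gamma} \in G = w W_I w^{-1}$ if and only if $s_{w^{-1}\gamma} = w^{-1} s_{\gamma} w \in W_I$, i.e.\ $w^{-1} \cdot \gamma \in \Phi(W_I) = \Phi_I$. Hence $\Phi(G) = w \cdot \Phi_I$. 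Writing $w = w^I v$ with $v \in W_I$ and using $v \cdot \Phi_I = \Phi_I$, this becomes $\Phi(G) = w^I \cdot \Phi_I$, so the choice of representative does not affect $\Phi(G)$ but will matter for the positivity analysis below.

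Next I would record the defining property of the minimal coset representative $w^I$: by the standard length--root correspondence, $w^I$ being shortest in $w W_I$ is equivalent to $\ell(w^I s) > \ell(w^I)$ for all $s \in I$, equivalently $w^I \cdot \alpha_s \in \Phi^+$ for all $s \in I$. The key step is to upgrade this to $w^I \cdot \Phi_I^+ \subseteq \Phi^+$ (and hence, applying it to $-\gamma$, also $w^I \cdot \Phi_I^- \subseteq \Phi^-$). This is the standard fact that a minimal double-coset representative maps positive roots of $W_I$ to positive roots of $W$; it can either be cited from \cite{Hum_book} or derived quickly: for $\gamma = u \cdot \alpha_s \in \Phi_I^+$ with $u \in W_I$ and $\ell(us) > \ell(u)$, one has $\ell(w^I u s) = \ell(w^I) + \ell(us) > \ell(w^I) + \ell(u) = \ell(w^I u)$ because $w^I$ is minimal in its coset, so $w^I u \cdot \alpha_s = w^I \cdot \gamma \in \Phi^+$. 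Combining the two inclusions with the bijection $\Phi(G) = w^I \cdot \Phi_I$ and the partition $\Phi = \Phi^+ \cup \Phi^-$, I obtain $w^I \cdot \Phi_I^+ = \Phi(G) \cap \Phi^+$.

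Now I would transfer the notion of simple root. Recall that $\Pi(G)$ consists of the roots in $\Phi(G) \cap \Phi^+$ that are not positive linear combinations of other elements of $\Phi(G) \cap \Phi^+$, and that $\Pi(W_I) = \Pi_I$ by Lemma \ref{lem:roots_for_parabolic}. Since $w^I$ acts as an invertible linear map on $V$, it preserves relations of the form $\gamma = \sum_i c_i \beta_i$ with $c_i > 0$; therefore, under the bijection $\Phi_I^+ \to \Phi(G) \cap \Phi^+$, $\gamma \mapsto w^I \cdot \gamma$, a root is extremal (not a positive combination of the others) on one side exactly when its image is extremal on the other. This yields $\Pi(G) = w^I \cdot \Pi(W_I) = w^I \cdot \Pi_I$. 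Finally, using $s_{w^I \cdot \gamma} = w^I s_{\gamma} (w^I)^{-1}$ together with $S(W_I) = I$, I conclude
\[
S(G) = \{ s_{w^I \cdot \gamma} \mid \gamma \in \Pi_I \} = w^I \{ s_{\gamma} \mid \gamma \in \Pi_I \} (w^I)^{-1} = w^I \, I \, (w^I)^{-1} .
\]

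The main obstacle is the middle step: the statement is false for an arbitrary representative $w$ of the coset $w W_I$, and the whole argument hinges on the fact that the \emph{minimal} representative $w^I$ sends $\Phi_I^+$ into $\Phi^+$. Everything else (the computation of $\Phi(G)$ and the transfer of extremality) is formal once that positivity-preservation property is in hand.
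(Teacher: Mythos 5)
Your proposal is correct and follows essentially the same route as the paper's proof: both hinge on the minimal coset representative satisfying $w^I \cdot \Phi_I^{\pm} \subseteq \Phi^{\pm}$ (which the paper cites from \cite[Section 5.12]{Hum_book} and you re-derive via length additivity), both conclude $w^I \cdot \Phi_I^+ = \Phi(G) \cap \Phi^+$, and both then transfer the extremality condition defining simple roots across the invertible linear action of $w^I$ to get $\Pi(G) = w^I \cdot \Pi_I$ and hence $S(G) = w^I I (w^I)^{-1}$. The only differences are organizational: you compute $\Phi(G) = w \cdot \Phi_I = w^I \cdot \Phi_I$ globally before intersecting with $\Phi^+$, whereas the paper proves the two inclusions between $w^I \cdot \Phi_I^+$ and $\Phi(G) \cap \Phi^+$ directly.
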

\begin{proof}
As shown in \cite[Section 5.12]{Hum_book}, such an element $w^I$ is well-defined and satisfies that $w_I = (w^I)^{-1} w \in W_I$ and $w^I \cdot \Pi_I \subseteq \Phi^+$.
Note that $G = w^I W_I (w^I)^{-1}$ and $w^I \cdot \Phi_I^{\pm} \subseteq \Phi^{\pm}$, respectively.
Put $u = w^I$.
Now let $\gamma \in \Phi(G) \cap \Phi^+$.
Then $s_{u^{-1} \cdot \gamma} = u^{-1} s_{\gamma} u \in u^{-1} G u = W_I$ and $u^{-1} \cdot \gamma \in \Phi(W_I) = \Phi_I$ (see Lemma \ref{lem:root_system_reflection_subgroup}).
We have $u^{-1} \cdot \gamma \in \Phi_I^+$, as otherwise $\gamma = u \cdot (u^{-1} \cdot \gamma) \in \Phi^-$, contradicting the choice of $\gamma$.
Hence $u^{-1} \cdot (\Phi(G) \cap \Phi^+) \subseteq \Phi_I^+$.
Similarly, we have $u \cdot \Phi_I^+ \subseteq \Phi(G) \cap \Phi^+$, therefore $u \cdot \Phi_I^+ = \Phi(G) \cap \Phi^+$.
This implies that a root $\gamma \in \Phi(G) \cap \Phi^+$ admits a non-trivial positive linear decomposition into elements of $\Phi(G) \cap \Phi^+$ if and only if $u^{-1} \cdot \gamma \in \Phi_I^+$ admits such a decomposition into elements of $\Phi_I^+$.
Hence we have $u \cdot \Pi_I = \Pi(G)$, therefore $u I u^{-1} = S(G)$, concluding the proof of Lemma \ref{lem:canonical_generator_parabolic}.
\end{proof}
This lemma implies that, for any expression $G = w W_I w^{-1}$ of a parabolic subgroup $G$ with $w \in W$ and $I \subseteq S$, the rank of $G$ as a reflection subgroup coincides with $|I|$, hence $|I|$ does not depend on the choice of the expression of $G$.

We also present the following fact, which will be used in later sections:
\begin{lemma}
\label{lem:generator_inclusion}
Let $H \leq G$ be two reflection subgroups of $W$.
Then $\Pi(G) \cap \Phi(H) \subseteq \Pi(H)$ and $S(G) \cap H \subseteq S(H)$.
\end{lemma}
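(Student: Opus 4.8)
The plan is to first establish the root-level inclusion $\Pi(G) \cap \Phi(H) \subseteq \Pi(H)$, and then deduce the generator-level inclusion $S(G) \cap H \subseteq S(H)$ as an immediate consequence. The only structural fact I would invoke is that $H \leq G$ forces $\Phi(H) \subseteq \Phi(G)$: indeed, $\gamma \in \Phi(H)$ means $s_{\gamma} \in H \leq G$, whence $s_{\gamma} \in G$ and $\gamma \in \Phi(G)$. I would also stress that the positivity appearing in the definitions of both $\Pi(G)$ and $\Pi(H)$ is positivity with respect to the same ambient set $\Phi^+$ of positive roots, so there is no ambiguity about which roots count as positive when we pass from $G$ to $H$.

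For the first inclusion, I would take $\gamma \in \Pi(G) \cap \Phi(H)$. Since $\gamma \in \Pi(G)$ we have $\gamma \in \Phi^+$, and together with $\gamma \in \Phi(H)$ this yields $\gamma \in \Phi(H) \cap \Phi^+$, which is the first requirement for membership in $\Pi(H)$. To verify the indecomposability requirement, suppose $\gamma = \sum_{i=1}^{k} c_i \beta_i$ with $c_i > 0$ and $\beta_i \in \Phi(H) \cap \Phi^+$. Because $\Phi(H) \cap \Phi^+ \subseteq \Phi(G) \cap \Phi^+$, this is also a positive linear decomposition of $\gamma$ into elements of $\Phi(G) \cap \Phi^+$; the defining property of $\Pi(G)$ then forces $\beta_i = \gamma$ for every $i$. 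Hence $\gamma$ satisfies the indecomposability condition defining $\Pi(H)$, so $\gamma \in \Pi(H)$, as desired.

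For the second inclusion, I would take $s \in S(G) \cap H$ and write $s = s_{\gamma}$ for the unique $\gamma \in \Pi(G)$ with $s = s_{\gamma}$ (noting $\gamma \in \Phi^+$). Since $s = s_{\gamma} \in H$, the definition of $\Phi(H)$ gives $\gamma \in \Phi(H)$, so $\gamma \in \Pi(G) \cap \Phi(H)$, and the first inclusion already proven yields $\gamma \in \Pi(H)$. Therefore $s = s_{\gamma} \in S(H)$, which completes the argument.

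I do not anticipate a genuine obstacle here: the whole argument rests on the monotonicity of the simple-system construction under passage to a subsystem, which becomes transparent once one observes $\Phi(H) \subseteq \Phi(G)$. The single point deserving care is to keep the two indecomposability conditions — one relative to $\Phi(G)$ and one relative to $\Phi(H)$ — cleanly separated, and to exploit that a decomposition taking place entirely within the smaller root subsystem $\Phi(H)$ is automatically a valid decomposition within the larger one $\Phi(G)$, which is precisely the direction of implication that the proof needs.
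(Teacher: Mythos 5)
Your proof is correct and follows essentially the same route as the paper: both establish $\Pi(G) \cap \Phi(H) \subseteq \Pi(H)$ by observing that a positive decomposition inside $\Phi(H) \cap \Phi^+$ is automatically one inside $\Phi(G) \cap \Phi^+$ (since $H \leq G$ gives $\Phi(H) \subseteq \Phi(G)$), then invoking the defining property of $\Pi(G)$, and both deduce $S(G) \cap H \subseteq S(H)$ directly from the root-level inclusion. Your write-up merely spells out a few steps the paper leaves implicit (e.g., that $\gamma \in \Phi(H) \cap \Phi^+$, and the passage from reflections back to positive roots in the second claim).
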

\begin{proof}
The latter claim $S(G) \cap H \subseteq S(H)$ follows immediately from the former claim $\Pi(G) \cap \Phi(H) \subseteq \Pi(H)$.
To prove that $\Pi(G) \cap \Phi(H) \subseteq \Pi(H)$, let $\gamma \in \Pi(G) \cap \Phi(H)$.
If $\gamma = \sum_{i=1}^{k} \lambda_i \beta_i$ for $\lambda_i > 0$ and $\beta_i \in \Phi(H) \cap \Phi^+$, then we have $\beta_i \in \Phi(G) \cap \Phi^+$ as $H \leq G$, while $\gamma \in \Pi(G)$, therefore $\beta_i = \gamma$ for every $i$.
This means that $\gamma \in \Pi(H)$, therefore the claim holds.
\end{proof}

The {\em Coxeter graph} $\Gamma$ of a Coxeter system $(W,S)$ is a simple undirected graph with vertex set $S$ such that two vertices $s,t$ are joined by an edge with label $m(s,t)$ if $3 \leq m(s,t) \leq \infty$ and $s,t$ are not joined if $m(s,t) = 2$.
The edge labels $m(s,t)$ are often omitted when $m(s,t) = 3$.
Moreover, the {\em odd Coxeter graph} $\Gamma^{\mathrm{odd}}$ of $(W,S)$ is defined to be the subgraph of $\Gamma$ obtained by removing all edges with label $m(s,t)$ being either an even number or $\infty$.
The next lemma will be used later:
\begin{lemma}
\label{lem:root_and_odd_graph}
For any $s \in S$, let $I \subseteq S$ be the connected component of $\Gamma^{\mathrm{odd}}$ that contains $s$.
Then we have $\mathrm{supp}(w \cdot \alpha_s) \cap I \neq \emptyset$ for every $w \in W$.
\end{lemma}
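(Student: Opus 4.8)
The plan is to detect membership in $I$ by means of a sign homomorphism and then evaluate it on the reflection $s_{w \cdot \alpha_s}$ in two incompatible ways. First I would define a map $\phi \colon W \to \{\pm 1\}$ on the generators by setting $\phi(r) = -1$ for $r \in I$ and $\phi(r) = +1$ for $r \in S \setminus I$. To see that $\phi$ extends to a group homomorphism I must verify that it respects each defining relation $(rt)^{m(r,t)} = 1$ with $m(r,t) < \infty$, i.e.\ that $(\phi(r)\phi(t))^{m(r,t)} = 1$ in $\{\pm 1\}$. When $r,t$ lie on the same side of the partition $S = I \sqcup (S \setminus I)$ this is automatic, since then $\phi(r)\phi(t) = 1$. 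The only nontrivial case is when exactly one of $r,t$ lies in $I$, so that $\phi(r)\phi(t) = -1$; here I use that $I$ is a \emph{full} connected component of $\Gamma^{\mathrm{odd}}$: if $m(r,t)$ were an odd integer $\geq 3$, then $r$ and $t$ would be joined by an edge of $\Gamma^{\mathrm{odd}}$ and hence lie in the same component, contrary to $r \in I$, $t \notin I$. Thus $m(r,t)$ is even, and $(-1)^{m(r,t)} = 1$ as required, so $\phi$ is a well-defined homomorphism.

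Next I would fix $w \in W$ and write $\gamma = w \cdot \alpha_s \in \Phi$, so that the associated reflection is $s_\gamma = w s w^{-1}$. Since $\{\pm 1\}$ is abelian, $\phi(s_\gamma) = \phi(w)\phi(s)\phi(w)^{-1} = \phi(s) = -1$, because $s \in I$. This value $-1$ is the invariant I will exploit.

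Finally I argue by contradiction. Suppose $\mathrm{supp}(\gamma) \cap I = \emptyset$, i.e.\ $\mathrm{supp}(\gamma) \subseteq J := S \setminus I$. By Lemma \ref{lem:roots_for_parabolic} we have $\Phi(W_J) = \{\beta \in \Phi \mid \mathrm{supp}(\beta) \subseteq J\}$, so $\gamma \in \Phi(W_J)$ and therefore $s_\gamma \in W_J = \langle J \rangle$. Expressing $s_\gamma$ as a word in the generators $J = S \setminus I$, each of which has $\phi$-value $+1$, yields $\phi(s_\gamma) = +1$, contradicting $\phi(s_\gamma) = -1$. Hence $\mathrm{supp}(\gamma) \cap I \neq \emptyset$, as claimed. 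Note that the support is insensitive to the sign of a root and $s_\gamma = s_{-\gamma}$, so the argument applies verbatim whether $w \cdot \alpha_s$ is positive or negative.

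The only genuinely delicate point is the well-definedness of $\phi$, which rests entirely on the evenness (or infiniteness) of $m(r,t)$ across the boundary of the odd-graph component $I$; once that is settled, the conclusion follows from a purely formal two-way evaluation of the sign character combined with Lemma \ref{lem:roots_for_parabolic}. I therefore expect the verification of the Coxeter relations for $\phi$ to be the main (though still routine) obstacle.
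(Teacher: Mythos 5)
Your proof is correct, but it takes a genuinely different route from the paper's. The paper's proof cites the well-known fact that two generators of $W$ are conjugate if and only if they lie in the same connected component of $\Gamma^{\mathrm{odd}}$: assuming $\mathrm{supp}(w \cdot \alpha_s) \cap I = \emptyset$, it applies Lemma \ref{lem:roots_for_parabolic} to get $w \cdot \alpha_s \in \Phi_{S \setminus I}$, then uses the orbit description $\Phi_{S \setminus I} = W_{S \setminus I} \cdot \Pi_{S \setminus I}$ to find $u \in W_{S \setminus I}$ and $t \in S \setminus I$ with $uw \cdot \alpha_s = \alpha_t$, whence $(uw)s(uw)^{-1} = t$ contradicts the conjugacy criterion. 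You instead build the sign character $\phi \colon W \to \{\pm 1\}$ that cuts out the component $I$, justify its well-definedness by the evenness (or infiniteness) of all labels $m(r,t)$ crossing the boundary of $I$, and evaluate $\phi$ on $s_{w \cdot \alpha_s} = w s w^{-1}$ in two incompatible ways. In effect you have inlined a proof of the very direction of the conjugacy criterion that the paper invokes as known (conjugate generators lie in the same odd component, whose standard proof is exactly such a parity character), so your argument is more self-contained; it also uses only the inclusion $\{\beta \in \Phi \mid \mathrm{supp}(\beta) \subseteq S \setminus I\} \subseteq \Phi(W_{S \setminus I})$ from Lemma \ref{lem:roots_for_parabolic}, rather than the orbit decomposition of $\Phi_{S \setminus I}$. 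The trade-off: the paper's proof is shorter modulo the cited fact, while yours replaces that citation with a routine verification of the Coxeter relations for $\phi$, which you carry out correctly, including the key observation that an odd finite label across the partition would force $r$ and $t$ into the same component of $\Gamma^{\mathrm{odd}}$.
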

\begin{proof}
First note the well-known fact that two generators $s,t \in S$ are conjugate in $W$ if and only if these belong to the same connected component of $\Gamma^{\mathrm{odd}}$.
Now assume for contrary that $\mathrm{supp}(w \cdot \alpha_s) \cap I = \emptyset$.
Then we have $w \cdot \alpha_s \in \Phi_{S \setminus I}$ by Lemma \ref{lem:roots_for_parabolic}, therefore we have $uw \cdot \alpha_s = \alpha_t$ for some $u \in W_{S \setminus I}$ and $t \in S \setminus I$.
This implies that $s$ and $t$ are conjugate in $W$, which contradicts the first remark and the definition of $I$.
Hence the claim holds.
\end{proof}

At last of this section, we introduce four special Coxeter systems (named $A_{\infty}^{\langle 1 \rangle}$, $A_{\infty}^{\langle 2 \rangle}$, $B_{\infty}$ and $D_{\infty}$ in this paper) which are irreducible and of infinite ranks, by the Coxeter graphs in Figure \ref{fig:locally_finite_Coxeter_group}.
The characteristics of these Coxeter systems are specified by the following proposition, which was proven by the author in \cite[Proposition 4.14]{Nui06} for the case that the group $G$ in the statement is irreducible.
Recall that a group $G$ is called {\em locally finite} if every finite subset of $G$ generates a finite subgroup.
Then the proposition is the following:
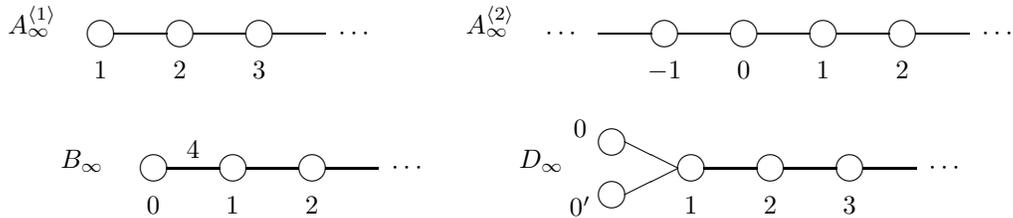
\begin{figure}[htb]
\centering
\begin{picture}(150,50)(0,-50)
\put(0,-20){$A_{\infty}^{\langle 1 \rangle}$}
\multiput(40,-20)(30,0){3}{\hbox to0pt{\hss\circle{10}\hss}\line(1,0){20}}
\put(125,-22){$\cdots$}
\put(35,-37){%
\hbox to0pt{\hss$1$\hss}\hspace*{30pt}%
\hbox to0pt{\hss$2$\hss}\hspace*{30pt}%
\hbox to0pt{\hss$3$\hss}%
}
\end{picture}
\qquad
\begin{picture}(220,50)(0,-50)
\put(0,-20){$A_{\infty}^{\langle 2 \rangle}$}
\put(30,-22){$\cdots$}
\put(70,-20){\line(-1,0){20}}
\multiput(80,-20)(30,0){4}{\hbox to0pt{\hss\circle{10}\hss}\line(1,0){20}}
\put(195,-22){$\cdots$}
\put(75,-37){%
\hbox to0pt{\hss$-1$\hss}\hspace*{30pt}%
\hbox to0pt{\hss$0$\hss}\hspace*{30pt}%
\hbox to0pt{\hss$1$\hss}\hspace*{30pt}%
\hbox to0pt{\hss$2$\hss}%
}
\end{picture}
\\
\begin{picture}(150,50)(0,-50)
\put(0,-20){$B_{\infty}$}
\multiput(40,-20)(30,0){3}{\hbox to0pt{\hss\circle{10}\hss}\line(1,0){20}}
\put(125,-22){$\cdots$}
\put(50,-16){\hbox to0pt{\hss$4$\hss}}
\put(35,-37){%
\hbox to0pt{\hss$0$\hss}\hspace*{30pt}%
\hbox to0pt{\hss$1$\hss}\hspace*{30pt}%
\hbox to0pt{\hss$2$\hss}%
}
\end{picture}
\qquad
\begin{picture}(180,50)(0,-50)
\put(0,-20){$D_{\infty}$}
\put(40,-10){\hbox to0pt{\hss\circle{10}\hss}}
\put(40,-30){\hbox to0pt{\hss\circle{10}\hss}}
\put(40,-10){\line(2,-1){20}}
\put(40,-30){\line(2,1){20}}
\multiput(70,-20)(30,0){3}{\hbox to0pt{\hss\circle{10}\hss}\line(1,0){20}}
\put(155,-22){$\cdots$}
\put(23,-8){\hbox to0pt{\hss$0$\hss}}
\put(23,-38){\hbox to0pt{\hss$0'$\hss}}
\put(65,-37){%
\hbox to0pt{\hss$1$\hss}\hspace*{30pt}%
\hbox to0pt{\hss$2$\hss}\hspace*{30pt}%
\hbox to0pt{\hss$3$\hss}%
}
\end{picture}
\caption{Locally finite irreducible Coxeter groups of infinite ranks}
\label{fig:locally_finite_Coxeter_group}
\end{figure}
\begin{proposition}
\label{prop:locally_finite_Coxeter}
Let $G$ be a reflection subgroup of a Coxeter group $W$, therefore $(G,S(G))$ is a Coxeter system.
Then $G$ is locally finite if and only if each irreducible component of $(G,S(G))$ is either finite or of one of the types $A_{\infty}^{\langle 1 \rangle}$, $A_{\infty}^{\langle 2 \rangle}$, $B_{\infty}$ and $D_{\infty}$.
\end{proposition}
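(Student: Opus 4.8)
The plan is to reduce the statement to the already-known irreducible case (the cited Proposition 4.14 of \cite{Nui06}) by decomposing the Coxeter system $(G,S(G))$ into its irreducible components. Write $\{S_j\}_{j \in J}$ for the vertex sets of the connected components of the Coxeter graph of $(G,S(G))$, and set $G_j = \langle S_j \rangle$. Since generators lying in distinct components commute, $G$ is the (restricted) direct product of the $G_j$; crucially, every element of $G$ is a finite word in $S(G)$ and hence has a non-trivial coordinate in only finitely many factors, so we only ever deal with finitely many commuting factors at a time.

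The first step I would carry out is the bridging fact that each $G_j$ is again a reflection subgroup of $W$ whose canonical generating set is exactly $S_j$, i.e.\ $S(G_j) = S_j$; this is what lets the irreducible case apply verbatim to the components. As $G_j = \langle S_j\rangle$ is generated by reflections of $W$, it is a reflection subgroup. For the inclusion $S_j \subseteq S(G_j)$, I would apply Lemma \ref{lem:generator_inclusion} to $G_j \leq G$ together with the standard fact, obtained by reading Lemma \ref{lem:roots_for_parabolic} inside the Coxeter system $(G,S(G))$, that $S(G) \cap G_j = S_j$. For the reverse inclusion, if some $s \in S(G_j) \setminus S_j$ existed, then $s \in G_j = \langle S_j \rangle$ with $S_j \subseteq S(G_j) \setminus \{s\}$, so $s$ would lie in the standard parabolic subgroup of $(G_j,S(G_j))$ generated by $S(G_j)\setminus\{s\}$, contradicting the basic Coxeter fact that a canonical generator never belongs to the subgroup generated by the remaining generators. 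Hence $S(G_j) = S_j$, and $(G_j,S(G_j))$ is precisely the $j$-th irreducible component, regarded as an irreducible reflection subgroup of $W$.

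Next I would treat the two directions. For the \lq\lq only if'' direction, assume $G$ is locally finite and fix a component $G_j$. Any finite subset of $G_j$ is a finite subset of $G$, hence generates a finite subgroup; thus $G_j$ is locally finite, and since $(G_j,S(G_j))$ is irreducible, Proposition \ref{prop:locally_finite_Coxeter} in the irreducible case forces it to be finite or of one of the four types $A_{\infty}^{\langle 1 \rangle}$, $A_{\infty}^{\langle 2 \rangle}$, $B_{\infty}$, $D_{\infty}$. For the \lq\lq if'' direction, assume each component is finite or one of the four types; then each $G_j$ is locally finite (trivially if finite, by the irreducible case otherwise). Given a finite subset $F \subseteq G$, let $T \subseteq S(G)$ be the finite set of generators occurring in fixed expressions of the elements of $F$. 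Then $T$ meets only finitely many components, and since distinct factors commute, $\langle T \rangle$ is the internal direct product of the finitely many subgroups $\langle T \cap S_j \rangle$, each finite by local finiteness of $G_j$. Hence $\langle F \rangle \subseteq \langle T \rangle$ is finite, so $G$ is locally finite.

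Since the irreducible case is granted, I do not expect a deep obstacle; the step demanding the most care is the bridging identification $S(G_j) = S_j$, i.e.\ checking that the reflection-subgroup structure is compatible with passing to standard parabolic subgroups of $(G,S(G))$, so that the four named types agree whether one reads the components inside $(G,S(G))$ or inside $W$. The remaining ingredients are the routine observations that local finiteness is inherited by subgroups and is preserved under restricted direct products, which suffice precisely because only finitely many commuting factors are involved at once.
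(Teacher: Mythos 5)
Your proof is correct, and its skeleton is the same as the paper's: decompose $(G,S(G))$ into irreducible components, invoke the cited irreducible case \cite[Proposition 4.14]{Nui06}, and use the fact that a finite subset of $G$ involves only finitely many canonical generators (your set $T$ is exactly Lemma \ref{lem:finite_subset_in_finite_rank} read inside the Coxeter system $(G,S(G))$). The one real difference is the opening move: the paper begins with ``we may assume without loss of generality that $G=W$,'' which is legitimate because local finiteness and the isomorphism types of the irreducible components are intrinsic to the abstract Coxeter system $(G,S(G))$ --- the ambient $W$ plays no role on either side of the equivalence. That single observation dissolves the step you flagged as the most delicate, namely the bridging identification $S(G_j)=S_j$ of the components as reflection subgroups of $W$: once $G=W$, the components are just standard parabolic subgroups $W_{S_j}$, and Lemma \ref{lem:roots_for_parabolic} gives $S(W_{S_j})=S_j$ immediately. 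Your bridging argument (Lemma \ref{lem:generator_inclusion} for the inclusion $S_j \subseteq S(G_j)$, plus the fact that a canonical generator never lies in the standard parabolic subgroup generated by the remaining generators) is itself correct, and it even yields something the paper's proof does not state --- that each $G_j$ is a reflection subgroup of $W$ whose canonical generating set \emph{computed relative to $W$} is exactly $S_j$ --- but that extra information is not needed for the proposition. Beyond this, your two directions (local finiteness passes to subgroups; a product of finitely many commuting finite subgroups is finite) and the paper's intermediate characterization (``every standard parabolic subgroup of finite rank is finite,'' checked componentwise) are just two bookkeeping styles for the same argument.
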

\begin{proof}
We may assume without loss of generality that $G = W$.
First, Lemma \ref{lem:finite_subset_in_finite_rank} implies that $W$ is locally finite if and only if every standard parabolic subgroup $W_I$ of $W$ of finite rank is finite.
Moreover, $W$ satisfies the latter condition if and only if every irreducible component $W'$ of $W$ satisfies the same condition.
Finally, as mentioned above, the preceding result \cite[Proposition 4.14]{Nui06} implies that $W'$ satisfies this condition if and only if either $W'$ is finite or $W'$ is of one of the four types specified in the statement.
Hence the claim holds.
\end{proof}
The four Coxeter systems in Figure \ref{fig:locally_finite_Coxeter_group} will be studied again in later sections.

\section{On infinite intersections of parabolic subgroups}
\label{sec:intersection_parabolic}

Conventionally, the {\em parabolic closure} $P(X)$ of a subset $X$ of a Coxeter group $W$ is defined to be the intersection of the family of all parabolic subgroups of $W$ containing $X$ (note that the family is non-empty, as $W$ itself is a parabolic subgroup).
It is generally true that $P(X)$ is a subgroup of $W$ containing $X$, and under a certain condition (for example, when $W$ has finite rank) it holds further that $P(X)$ becomes the unique minimal {\em parabolic} subgroup containing $X$.
The latter property (of which we include a proof below for completeness) follows from the following fact, which is just a restatement of the result \cite[Corollary 7]{FH01} given by Franzsen and Howlett:
\begin{lemma}
[Franzsen--Howlett]
\label{lem:Fra-How}
Let $I,J \subseteq S$ and $w \in W$.
Then we have $W_I \cap w W_J w^{-1} = u W_K u^{-1}$ for some $K \subseteq I$ and $u \in W_I$.
Moreover, if in addition $W_I \not\subseteq w W_J w^{-1}$, then we have $K \subsetneq I$.
\end{lemma}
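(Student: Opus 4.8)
The plan is to reduce the problem, by conjugating within $W_I$, to the case where $w$ is the distinguished minimal-length representative of the double coset $W_IwW_J$, and then to invoke Kilmoyer's description of the intersection of two standard parabolic subgroups.

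First I would normalize $w$. Let $d$ be the element of minimal length in $W_IwW_J$, which is characterized by $d^{-1}\cdot\Pi_I\subseteq\Phi^+$ and $d\cdot\Pi_J\subseteq\Phi^+$ (the theory of distinguished double coset representatives, cf.\ \cite[Section 5.12]{Hum_book}, applies here regardless of the rank of $W$, since every element of $W$ has finite length). Writing $w=adb$ with $a\in W_I$ and $b\in W_J$, and using $bW_Jb^{-1}=W_J$ together with $aW_Ia^{-1}=W_I$, we obtain
\begin{equation*}
W_I\cap wW_Jw^{-1}=a\,(W_I\cap dW_Jd^{-1})\,a^{-1}.
\end{equation*}
Hence it suffices to analyze $W_I\cap dW_Jd^{-1}$ and then transport the answer back by conjugating with $a\in W_I$; this conjugating element is precisely the $u$ promised in the statement.

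The heart of the argument is the claim that, for the distinguished $d$, the intersection $W_I\cap dW_Jd^{-1}$ is the \emph{standard} parabolic subgroup $W_K$ with $K=\{s\in I:d^{-1}sd\in W_J\}\subseteq I$. The inclusion $W_K\subseteq W_I\cap dW_Jd^{-1}$ is immediate. For the reverse inclusion I would induct on length. Given $1\neq x\in W_I\cap dW_Jd^{-1}$, write $x=dyd^{-1}$ with $y\in W_J$; two-sided minimality of $d$ gives $\ell(xd)=\ell(x)+\ell(d)$, $\ell(dy)=\ell(d)+\ell(y)$, and $\ell(sd)=\ell(d)+1$ for all $s\in I$. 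Choosing a left descent $s\in I$ of $x$ and combining these length identities shows that $s$ is also a left descent of $dy=xd$, that is, $y^{-1}\cdot(d^{-1}\cdot\alpha_s)\in\Phi^-$ while $d^{-1}\cdot\alpha_s\in\Phi^+$. Since an element of $W_J$ can carry a positive root to a negative one only when that root lies in $\Phi_J^+$, we conclude $d^{-1}\cdot\alpha_s\in\Phi_J^+$, so $\mathrm{supp}(d^{-1}\cdot\alpha_s)\subseteq J$ and thus $d^{-1}sd=s_{d^{-1}\cdot\alpha_s}\in W_J$ by Lemma \ref{lem:roots_for_parabolic}; hence $s\in K$. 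Then $sx=d\,(d^{-1}sd)\,y\,d^{-1}\in dW_Jd^{-1}$ lies in $W_I$ and is shorter than $x$, so induction yields $sx\in W_K$ and therefore $x=s\cdot(sx)\in W_K$. Assembling these length identities and verifying the positivity fact for $W_J$ is the step I expect to be the main obstacle, since it is the one place where the two-sided minimality of $d$ must really be exploited rather than used formally.

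Granting the claim, I would conclude as follows. Transporting back gives $W_I\cap wW_Jw^{-1}=u\,W_K\,u^{-1}$ with $u=a\in W_I$ and $K\subseteq I$, which is the first assertion. For the \emph{moreover} part, suppose $W_I\not\subseteq wW_Jw^{-1}$ but $K=I$; then $W_I\cap wW_Jw^{-1}=aW_Ia^{-1}=W_I$, forcing $W_I\subseteq wW_Jw^{-1}$, a contradiction. Hence $K\neq I$, and since $K\subseteq I$ we obtain $K\subsetneq I$. I would finally remark that the whole argument uses only the finiteness of the length of each individual group element, so it is valid for $W$ of arbitrary rank; the failure of parabolic closures exhibited later in the paper arises from taking \emph{infinite} intersections of parabolic subgroups, not from the two-fold intersection treated here.
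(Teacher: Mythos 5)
Your proof is correct, but it is worth noting that the paper itself does not prove this lemma at all: it is quoted as a restatement of \cite[Corollary 7]{FH01}, so there is no in-paper argument to compare against. What you have written is the classical Kilmoyer-style double-coset argument, and all the steps check out: the existence of a minimal-length element $d$ of $W_IwW_J$ needs only that individual elements have finite length; minimality gives $d^{-1}\cdot\Pi_I\subseteq\Phi^+$ and $d\cdot\Pi_J\subseteq\Phi^+$, hence the length identities $\ell(ad)=\ell(a)+\ell(d)$ ($a\in W_I$) and $\ell(dy)=\ell(d)+\ell(y)$ ($y\in W_J$); a left descent $s\in I$ of $x\in W_I\cap dW_Jd^{-1}$ is then a left descent of $xd=dy$, forcing $y^{-1}$ to send the positive root $d^{-1}\cdot\alpha_s$ negative, which indeed can happen only if $\mathrm{supp}(d^{-1}\cdot\alpha_s)\subseteq J$, so that $d^{-1}sd\in W_J$ by the paper's Lemma \ref{lem:roots_for_parabolic}; and the downward induction on $\ell(x)$ closes the argument, with the \lq\lq moreover'' part following formally. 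Your approach buys something the citation does not make explicit: \cite{FH01} is invoked in the paper's introduction in the context of finitely generated Coxeter groups, whereas the lemma is applied here to $(W,S)$ of arbitrary rank, and your self-contained proof (using only minimal coset representative theory as in \cite[Section 5.12]{Hum_book} and facts already proved in the paper) verifies that the statement really does hold without any rank restriction --- precisely the point your closing remark makes, and a point the paper leaves implicit. One tiny caveat: you describe $d$ as \emph{the} minimal-length element \lq\lq characterized'' by the two positivity conditions; uniqueness and the reverse implication of that characterization are true but not needed, since your argument only uses that some minimal-length $d$ exists and satisfies both conditions.
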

\begin{corollary}
\label{cor:intersection_parabolic_finite_rank}
\begin{enumerate}
\item \label{item:cor_intersection_parabolic_finite_rank_1}
If $G_1 \supsetneq G_2 \supsetneq \cdots$ is a strictly descending sequence of parabolic subgroups of $W$ such that the rank of $G_1$ is finite, then the length of this sequence is not larger than the rank of $G_1$ (hence finite).
\item \label{item:cor_intersection_parabolic_finite_rank_2}
If a (possibly infinite) family $\mathcal{F}$ consists of parabolic subgroups of $W$ and some member of $\mathcal{F}$ has finite rank, then $\bigcap \mathcal{F}$ is also a parabolic subgroup of $W$ of finite rank.
\item \label{item:cor_intersection_parabolic_finite_rank_3}
If $X$ is a subset of $W$ contained in some parabolic subgroup of finite rank, then $P(X)$ is the unique minimal parabolic subgroup of $W$ containing $X$ and the rank of $P(X)$ is finite.
\end{enumerate}
\end{corollary}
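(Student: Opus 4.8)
The plan is to reduce all three parts to a single \emph{rank-drop lemma}: if $P \supsetneq Q$ are parabolic subgroups with $P$ of finite rank, then $\mathrm{rank}(Q) < \mathrm{rank}(P)$. To prove this I would conjugate so that $P = W_I$ with $I \subseteq S$ finite (conjugation preserves ranks, inclusions and parabolicity, so this costs nothing). Writing $Q = w W_J w^{-1}$, the inclusion $Q \subseteq W_I$ gives $Q = W_I \cap w W_J w^{-1}$, and Lemma~\ref{lem:Fra-How} produces $Q = u W_K u^{-1}$ with $K \subseteq I$ and $u \in W_I$. Since $Q \neq W_I$ we have $W_I \not\subseteq w W_J w^{-1}$, so the ``moreover'' clause of Lemma~\ref{lem:Fra-How} forces $K \subsetneq I$; by the rank remark following Lemma~\ref{lem:canonical_generator_parabolic} this gives $\mathrm{rank}(Q) = |K| < |I| = \mathrm{rank}(P)$.

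Part~\ref{item:cor_intersection_parabolic_finite_rank_1} then follows at once, since along $G_1 \supsetneq G_2 \supsetneq \cdots$ the ranks strictly decrease within the non-negative integers bounded by $\mathrm{rank}(G_1)$, allowing at most $\mathrm{rank}(G_1)$ strict inclusions. For Part~\ref{item:cor_intersection_parabolic_finite_rank_2} I would consider the finite sub-intersections of $\mathcal{F}$ that contain the distinguished finite-rank member $P_0$; each is parabolic of finite rank by iterating Lemma~\ref{lem:Fra-How}. Choosing one such intersection $Q$ of \emph{minimal rank} (a minimum taken over non-negative integers, so no Axiom of Choice is invoked), I claim $Q = \bigcap \mathcal{F}$: for any $P \in \mathcal{F}$, the subgroup $Q \cap P$ is again such a finite sub-intersection and is contained in $Q$, so the rank-drop lemma together with minimality of $\mathrm{rank}(Q)$ forces $Q \cap P = Q$, i.e.\ $Q \subseteq P$. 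Letting $P$ range over $\mathcal{F}$ yields $Q \subseteq \bigcap \mathcal{F} \subseteq Q$, whence $\bigcap \mathcal{F} = Q$ is parabolic of finite rank.

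Part~\ref{item:cor_intersection_parabolic_finite_rank_3} is immediate from Part~\ref{item:cor_intersection_parabolic_finite_rank_2} applied to the family of all parabolic subgroups containing $X$, which is non-empty and has a finite-rank member by hypothesis: the intersection $P(X)$ is then parabolic of finite rank, contains $X$, and is contained in every parabolic subgroup containing $X$, hence is the unique minimal such subgroup.

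I expect the only genuine content to lie in the rank-drop lemma, whose single delicate point is verifying that $Q \subsetneq W_I$ really triggers the ``moreover'' clause of Lemma~\ref{lem:Fra-How} (that is, that $W_I \not\subseteq w W_J w^{-1}$) rather than merely its first clause; everything else is bookkeeping with the rank invariant. The apparent obstacle — passing from finite intersections to the possibly infinite family of Parts~\ref{item:cor_intersection_parabolic_finite_rank_2} and~\ref{item:cor_intersection_parabolic_finite_rank_3} — is defused by the minimal-rank choice, which replaces any transfinite descent by a single well-ordering of the non-negative integers.
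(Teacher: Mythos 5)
Your proposal is correct and follows essentially the same route as the paper: everything rests on Lemma~\ref{lem:Fra-How} yielding a strict rank drop under proper inclusion of parabolic subgroups (your \emph{rank-drop lemma} is precisely the paper's opening observation), and part~\ref{item:cor_intersection_parabolic_finite_rank_3} is deduced from part~\ref{item:cor_intersection_parabolic_finite_rank_2} identically. The only cosmetic difference is in part~\ref{item:cor_intersection_parabolic_finite_rank_2}, where the paper iteratively forms $G_{i+1} = G_i \cap H$ for members $H$ not containing $G_i$ and invokes part~\ref{item:cor_intersection_parabolic_finite_rank_1} to halt the descent, whereas you select a finite sub-intersection of minimal rank and verify it is contained in every member; the two arguments are interchangeable, with yours making slightly more explicit that no choice principle is needed.
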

\begin{proof}
Lemma \ref{lem:Fra-How} says that the intersection of a parabolic subgroup of finite rank with another parabolic subgroup (possibly of infinite rank) not containing the former is again parabolic of smaller rank.
Hence the claim \ref{item:cor_intersection_parabolic_finite_rank_1} follows, as the ranks of the $G_i$ are strictly descending as well.
For the claim \ref{item:cor_intersection_parabolic_finite_rank_2}, beginning with any member $G_1$ of $\mathcal{F}$ of finite rank, if there is a member $H$ of $\mathcal{F}$ that does not contain $G_i$, then we define $G_{i+1}$ to be the intersection $G_i \cap H$.
By the claim \ref{item:cor_intersection_parabolic_finite_rank_1}, this inductive process of choosing descending parabolic subgroups halts in finitely many steps, and the last parabolic subgroup $G_i$ is nothing but the intersection $\bigcap \mathcal{F}$ (as it is contained in every member of $\mathcal{F}$) and it has smaller rank than $G_1$.
Hence the claim \ref{item:cor_intersection_parabolic_finite_rank_2} holds.
Finally, for the claim \ref{item:cor_intersection_parabolic_finite_rank_3}, the uniqueness property follows immediately from Lemma \ref{lem:Fra-How}, and the other part is implied by the claim \ref{item:cor_intersection_parabolic_finite_rank_2} for $\mathcal{F}$ being the family of all parabolic subgroups of $W$ containing $X$.
\end{proof}
Note that the assumption on the subset $X$ in the claim \ref{item:cor_intersection_parabolic_finite_rank_3} of Corollary \ref{cor:intersection_parabolic_finite_rank} is equivalent to that $X$ is contained in some {\em standard} parabolic subgroup of finite rank (apply Lemma \ref{lem:finite_subset_in_finite_rank} to the given parabolic subgroup of finite rank).
In fact most of the preceding works concerning parabolic closures of subgroups have dealt with situations where the assumption in the claim \ref{item:cor_intersection_parabolic_finite_rank_3} of Corollary \ref{cor:intersection_parabolic_finite_rank} is satisfied (e.g., $W$ is of finite rank, or $\langle X \rangle$ is finitely generated), and in those works the property that the parabolic closure is indeed a parabolic subgroup has played a significant role.
Hence it is natural to hope to have the desirable properties in Corollary \ref{cor:intersection_parabolic_finite_rank} for a general case as well.
However, unfortunately there is a counterexample for the claims when the finiteness assumptions in these claims are relaxed.
In the rest of this section we present such a counterexample, where (for the claim \ref{item:cor_intersection_parabolic_finite_rank_3}) the subgroup $\langle X \rangle$ is even a reflection subgroup and irreducible (as a Coxeter group).
\begin{example}
\label{ex:counterexample}
Let $(W,S)$ be the Coxeter system of type $A_{\infty}^{\langle 1 \rangle}$ with generating set $S = \{s_i \mid 1 \leq i \in \mathbb{Z}\}$ (see Figure \ref{fig:locally_finite_Coxeter_group} in Section \ref{sec:Coxeter_group}).
We write $\alpha_i = \alpha_{s_i}$ for simple roots of $(W,S)$.
Put $I_{i,j} = \{s_k \mid i \leq k \leq j\}$ for $i,j \in \mathbb{Z}$, and put $I_{i,\infty} = \{s_k \mid i \leq k\}$.
Put $\beta_i = \alpha_{2i-1} + \alpha_{2i} \in \Phi^+$ for $1 \leq i \in \mathbb{Z}$.
We define $X = \{s_{\beta_i} \mid i \geq 1\}$ and define a reflection subgroup $G$ of $W$ by $G = \langle X \rangle$.
On the other hand, let $\Psi_i = \{\beta_j \mid 1 \leq j \leq i\}$ and $X_i = \{s_{\beta_j} \mid 1 \leq j \leq i\}$ for $i \geq 1$.
The following property plays a key role in this construction:
\begin{equation}
\label{eq:counterexample_relation}
\begin{split}
&\mbox{If } u_i = (s_is_{i-1} \cdots s_1) \cdot \cdots \cdot (s_{2i-3}s_{2i-4}s_{2i-5}) \cdot (s_{2i-2}s_{2i-3}) \cdot (s_{2i-1}) \in W, \\
&\mbox{then } u_i \cdot \beta_j = \alpha_{j+i} \mbox{ and } u_i s_{\beta_j} u_i^{-1} = s_{j+i} \mbox{ for each } \beta_j \in \Psi_i.
\end{split}
\end{equation}
The property (\ref{eq:counterexample_relation}) implies that each $(\langle X_i \rangle,X_i)$ is a Coxeter system isomorphic to $(W_{I_{i+1,2i}},I_{i+1,2i})$.
More precisely, it follows from (\ref{eq:counterexample_relation}) that $u_i^{-1} \cdot \Pi_{I_{i+1,2i}} = \Psi_i \subseteq \Phi^+$, therefore $(u_i^{-1})^{I_{i+1,2i}} = u_i^{-1}$ and $\Pi(\langle X_i \rangle) = u_i^{-1} \cdot \Pi_{I_{i+1,2i}} = \Psi_i$ by Lemma \ref{lem:canonical_generator_parabolic} (hence $S(\langle X_i \rangle) = X_i$).
Now by taking the inductive limit, it follows that $(G,X)$ is a Coxeter system (of type $A_{\infty}^{\langle 1 \rangle}$, which is thus irreducible).
Moreover, as $G = \bigcup_{i \geq 1} \langle X_i \rangle$, Lemma \ref{lem:generator_inclusion} implies that
\begin{equation}
S(G) = S(G) \cap G = \bigcup_{i \geq 1} (S(G) \cap \langle X_i \rangle) \subseteq \bigcup_{i \geq 1} S(\langle X_i \rangle) = \bigcup_{i \geq 1} X_i = X \enspace,
\end{equation}
therefore $S(G) \subseteq X$, hence $S(G) = X$ (note that any proper subset of $X$ cannot generate $G$).
Now Lemma \ref{lem:canonical_generator_parabolic} implies that $G$ is not a parabolic subgroup, as $S(G)$ is not conjugate to any subset of $S$.

For each $i \geq 1$, we define $G_i$ to be the subgroup generated by $Y_i = X_i \cup I_{2i+1,\infty}$.
The same argument as the previous paragraph implies that $(G_i,Y_i)$ is also a Coxeter system and we have $\Pi(G_i) = u_i^{-1} \cdot \Pi_{I_{i+1,\infty}} = \Psi_i \cup \Pi_{I_{2i+1,\infty}}$.
Hence each $G_i$ is a parabolic subgroup, and we have $G_i \supsetneq G_{i+1}$ and $G_i \supseteq G$ for every $i$.

From now, we prove that $\bigcap_{i \geq 1} G_i = G$.
Let $w \in \bigcap_{i \geq 1} G_i$.
Take an index $i \in \mathbb{Z}$ such that $w \in W_{I_{1,2i}}$.
Now we show that $w \in \langle X_i \rangle$.
Assume the contrary, which means $w \in G_i \setminus \langle X_i \rangle$.
Now take a shortest expression $w = s_{\gamma_1} s_{\gamma_2} \cdots s_{\gamma_k}$ of $w$ as the product of some elements $s_{\gamma_j} \in Y_i$ with $\gamma_j \in \Phi^+$.
By multiplying an element of $\langle X_i \rangle$ if necessary, we may assume without loss of generality that $s_{\gamma_k} \in I_{2i+1,\infty}$.
Then it follows from \cite[Theorem 2.3(iv)]{Nui11} that $w \cdot \gamma_k \in \Phi^-$.
This is a contradiction, as we have $w \in W_{I_{1,2i}}$ and $\gamma_k \in \Pi_{I_{2i+1,\infty}}$.
Hence we have $w \in \langle X_i \rangle \subseteq G$, therefore the claim of this paragraph holds.

The above argument indeed gives a counterexample for the generalized version of Corollary \ref{cor:intersection_parabolic_finite_rank}.
Namely, for the claim \ref{item:cor_intersection_parabolic_finite_rank_1}, we have a strictly descending infinite sequence $G_1 \supsetneq G_2 \supsetneq \cdots$ of parabolic subgroups.
For the claims \ref{item:cor_intersection_parabolic_finite_rank_2} and \ref{item:cor_intersection_parabolic_finite_rank_3}, let $\mathcal{F}$ be the family of all parabolic subgroups of $W$ containing $X$ (hence $P(X) = \bigcap \mathcal{F}$).
Then $G_i \in \mathcal{F}$ for every $i \geq 1$, therefore $G \subseteq \bigcap \mathcal{F} \subseteq \bigcap_{i \geq 1} G_i = G$.
This implies that $G = \bigcap_{i \geq 1} \mathcal{F} = P(X)$, which is not a parabolic subgroup as shown above.
Hence the claims of Corollary \ref{cor:intersection_parabolic_finite_rank} will fail when the finiteness assumptions in the statements are removed.
\end{example}

\section{Locally parabolic subgroups and locally parabolic closures}
\label{sec:parabolic_closure}

As shown in Section \ref{sec:intersection_parabolic}, some desirable properties of parabolic closures $P(X)$ under some finiteness assumptions are lost in a general case, namely $P(X)$ itself is not always a parabolic subgroup.
In this section, we propose a modified definition of closure which has a similar desirable property, by introducing a notion of \lq\lq locally parabolic subgroups'' in Coxeter groups.
This closure is named \lq\lq locally parabolic closure'' and satisfies that it is indeed a locally parabolic subgroup (in contrast to the case of parabolic closure which is not necessarily a parabolic subgroup).

Our definition of locally parabolic subgroups is the following:
\begin{definition}
\label{defn:locally_parabolic}
We say that a subgroup $G$ of a Coxeter group $W$ is {\em locally parabolic} if $G$ is a reflection subgroup and each finite subset of the canonical generating set $S(G)$ of $G$ is conjugate in $W$ to a subset of $S$.
\end{definition}
By virtue of Lemma \ref{lem:canonical_generator_parabolic}, this definition is equivalent to that $G$ is a reflection subgroup and each finite subset of $S(G)$ generates a parabolic subgroup of $W$.
Lemma \ref{lem:canonical_generator_parabolic} also implies that any parabolic subgroup of $W$ is locally parabolic, hence Definition \ref{defn:locally_parabolic} generalizes the notion of parabolic subgroups.
Note also that a reflection subgroup of finite rank is locally parabolic if and only if it is parabolic, therefore the two notions agree on the finite rank case.
Now we present the following result, which corresponds to the claim \ref{item:cor_intersection_parabolic_finite_rank_2} of Corollary \ref{cor:intersection_parabolic_finite_rank} for parabolic subgroups but holds without any finiteness assumption:
\begin{theorem}
\label{thm:intersection_locally_parabolic}
The intersection of an arbitrary number of locally parabolic subgroups of a Coxeter group is also locally parabolic.
\end{theorem}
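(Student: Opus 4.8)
The plan is to set $G = \bigcap_\lambda G_\lambda$, to let $R$ be the reflection subgroup generated by all reflections lying in $G$, and to prove both that $G = R$ (so that $G$ is a reflection subgroup) and that $G$ satisfies the condition of Definition \ref{defn:locally_parabolic}. Writing $\Phi' = \bigcap_\lambda \Phi(G_\lambda)$, the reflections contained in $G$ are exactly those $s_\gamma$ with $\gamma \in \Phi'$, so $R = \langle s_\gamma \mid \gamma \in \Phi' \rangle \le G$, and everything will follow once I can control the parabolic closures of the finitely generated subgroups of $G$.

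The engine I would build first is the following claim: for every finitely generated subgroup $H \le G$, the parabolic closure $P(H)$ taken in $W$ is a parabolic subgroup of $W$ of finite rank, and moreover $P(H) \le R$. To establish it, I would first observe that $H$, being finitely generated, lies in a standard parabolic subgroup of $W$ of finite rank (Lemma \ref{lem:finite_subset_in_finite_rank}), so $P(H)$ is the unique minimal parabolic subgroup of $W$ containing $H$ and has finite rank (Corollary \ref{cor:intersection_parabolic_finite_rank}(\ref{item:cor_intersection_parabolic_finite_rank_3})). Next, for each $\lambda$ I would run the same reasoning inside the Coxeter system $(G_\lambda, S(G_\lambda))$: since $H \le G_\lambda$ is finitely generated, its parabolic closure $P_\lambda(H)$ computed within $G_\lambda$ is a finite-rank parabolic subgroup of $G_\lambda$, of the form $g \langle J \rangle g^{-1}$ with $g \in G_\lambda$ and $J$ a finite subset of $S(G_\lambda)$. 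Here local parabolicity enters decisively: by Definition \ref{defn:locally_parabolic} the subgroup $\langle J \rangle$ is parabolic in $W$, hence so is its conjugate $P_\lambda(H)$, and this parabolic subgroup of $W$ satisfies $H \le P_\lambda(H) \le G_\lambda$. By minimality of $P(H)$ among parabolic subgroups of $W$ containing $H$, I then get $P(H) \le P_\lambda(H) \le G_\lambda$ for every $\lambda$, whence $P(H) \le \bigcap_\lambda G_\lambda = G$. Finally, since $P(H)$ is itself a reflection subgroup, every reflection it contains is a reflection of $G$, so $P(H)$ is generated by reflections lying in $\Phi'$ and therefore $P(H) \le R$.

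With this claim available, the two required assertions fall out quickly. For $G \le R$, I would take an arbitrary $w \in G$ and apply the claim to $H = \langle w \rangle$, giving $w \in H \le P(H) \le R$; hence $G = R$ is a reflection subgroup and $S(G)$ is defined. For local parabolicity, given a finite $F \subseteq S(G)$ I would put $H = \langle F \rangle$ and use Lemma \ref{lem:generator_inclusion} to get $F \subseteq S(G) \cap H \subseteq S(H)$, which forces $F = S(H)$ so that $H$ has finite rank. The claim then supplies a finite-rank parabolic subgroup $P(H)$ of $W$ with $H \le P(H) \le G$, and a second application of Lemma \ref{lem:generator_inclusion} yields $F \subseteq S(G) \cap P(H) \subseteq S(P(H))$. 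Since $S(P(H))$ is conjugate in $W$ to a subset of $S$ (Lemma \ref{lem:canonical_generator_parabolic}), the set $F$ is conjugate to a subset of $S$, which is exactly the condition of Definition \ref{defn:locally_parabolic}.

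The step I expect to be the main obstacle is showing that $G$ is a reflection subgroup at all, that is, the inclusion $G \le R$; the generating-set condition is comparatively routine once the closure-pushing claim is in place. This is genuinely the crux, because an intersection of \emph{arbitrary} reflection subgroups need not be a reflection subgroup: already in a dihedral group of order eight one can intersect two rank-two reflection subgroups and obtain a two-element group generated by a rotation, which contains no reflection. Thus local parabolicity must be used in an essential way, and in the argument above it is used precisely to promote each parabolic closure $P_\lambda(H)$, computed inside $G_\lambda$, to an honest parabolic subgroup of $W$; this is what forces $P(H)$, and hence every element of $G$, back inside $R$.
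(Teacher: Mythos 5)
Your proof is correct and follows essentially the same route as the paper's: both use local parabolicity to promote a finite-rank parabolic subgroup of $(G_\lambda, S(G_\lambda))$ containing a finitely generated piece $H$ to a parabolic subgroup of $W$, trap the parabolic closure $P(H)$ inside every $G_\lambda$ (hence inside the intersection) by minimality via Corollary \ref{cor:intersection_parabolic_finite_rank}, and then conclude with Lemma \ref{lem:generator_inclusion} and Lemma \ref{lem:canonical_generator_parabolic}. The only cosmetic differences are your auxiliary subgroup $R$ and your use of the parabolic closure computed within $G_\lambda$, where the paper simply takes a finite-rank standard parabolic subgroup of $G_\lambda$ given by Lemma \ref{lem:finite_subset_in_finite_rank}.
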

\begin{proof}
Let $P_{\lambda}$ ($\lambda \in \Lambda$) be locally parabolic subgroups of a Coxeter group $W$.
Put $P = \bigcap_{\lambda \in \Lambda} P_{\lambda} \leq W$.
First we show that $P$ is a reflection subgroup.
Let $w \in P$.
Then for each $\lambda \in \Lambda$, $w$ is contained in the subgroup $G_{\lambda}$ of $P_{\lambda}$ generated by a finite subset $X_{\lambda}$ of $S(P_{\lambda})$.
By the definition of locally parabolic subgroups, this $X_{\lambda}$ is conjugate in $W$ to a subset of $S$, therefore $G_{\lambda}$ is a parabolic subgroup of $W$.
This implies that the parabolic closure $P(w)$ of $w$ is contained in $G_{\lambda}$, therefore in $P_{\lambda}$.
Hence we have $w \in P(w) \leq P$, therefore $w$ is expressed as a product of reflections contained in $P(w) \subseteq P$.
This means that $P$ is a reflection subgroup, as desired.

From now, we show that $P$ is locally parabolic.
Let $X$ be a finite subset of $S(P)$.
By the same argument as the previous paragraph, the parabolic closure $P(X)$ of $X$ is contained in $P$.
Now we have $X \subseteq S(P) \cap P(X)$, while $S(P) \cap P(X) \subseteq S(P(X))$ by Lemma \ref{lem:generator_inclusion}, therefore $X \subseteq S(P(X))$.
Now $S(P(X))$ is conjugate to a subset of $S$ by Lemma \ref{lem:canonical_generator_parabolic}, so is $X$.
Hence $P$ is locally parabolic, concluding the proof of Theorem \ref{thm:intersection_locally_parabolic}.
\end{proof}
\begin{remark}
\label{rem:intersection_infinite_parabolic}
By Theorem \ref{thm:intersection_locally_parabolic}, the intersection of an arbitrary number of parabolic subgroups of a Coxeter group is locally parabolic, hence it is a reflection subgroup.
In contrast, the intersection of (even two) reflection subgroups is not necessarily a reflection subgroup.
For instance, consider the Weyl group $W$ of type $G_2$ (or finite Coxeter group of type $I_2(6)$) with canonical generators $s,t$ and their relation $(st)^6 = 1$.
Then the intersection of the reflection subgroups generated by $\{s,tstst\}$ and by $\{t,ststs\}$, respectively, is the subgroup generated by the longest element $(st)^3$ of $W$, which is not a reflection subgroup, as $(st)^3$ is central in $W$.
\end{remark}
As each subset $X$ of $W$ is contained in a locally parabolic subgroup of $W$ (e.g., $W$ itself), Theorem \ref{thm:intersection_locally_parabolic} implies that there exists a unique minimal locally parabolic subgroup of $W$ containing $X$.
We write it as $LP(X)$ and call it the {\em locally parabolic closure} of $X$.
We also write $LP(X)$ as $LP(x)$ when $X = \{x\}$.
The parabolic and locally parabolic closures satisfy the relation $LP(X) \subseteq P(X)$, as each parabolic subgroup is also a locally parabolic subgroup.
Moreover, this notion indeed generalizes the notion of parabolic closure, as shown by the following result:
\begin{lemma}
\label{lem:two_closures}
If $P \leq W$ is a parabolic subgroup of finite rank and $G$ is a locally parabolic subgroup of $W$ contained in $P$, then $G$ is also parabolic of rank not larger than that of $P$.
Hence we have $LP(X) = P(X)$ if $X$ is a subset of $W$ contained in some parabolic subgroup of finite rank.
\end{lemma}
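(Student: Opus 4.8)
The plan is to prove the first assertion and then deduce the second from it together with Corollary~\ref{cor:intersection_parabolic_finite_rank}. The crux is to show that a locally parabolic subgroup $G$ contained in a finite-rank parabolic subgroup $P$ must itself have \emph{finite} rank; once this is known, the fact noted after Definition~\ref{defn:locally_parabolic} that a finite-rank locally parabolic subgroup is automatically parabolic finishes the first assertion, and the rank bound falls out along the way.

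First I would record the following rank-monotonicity for nested parabolic subgroups: if $Q \leq P$ are both parabolic subgroups of $W$ and $P$ has finite rank, then the rank of $Q$ is not larger than that of $P$. To see this, write $P = a W_I a^{-1}$ and $Q = b W_J b^{-1}$ with $I$ finite, and use $Q = Q \cap P$ to compute $a^{-1} Q a = W_I \cap (a^{-1}b) W_J (a^{-1}b)^{-1}$. Lemma~\ref{lem:Fra-How} then presents this intersection as $u W_K u^{-1}$ with $K \subseteq I$ and $u \in W_I$, so the rank of $Q$ equals $|K| \leq |I|$, the rank of $P$.

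Next I would show $S(G)$ is finite. Suppose not, and choose a finite subset $X \subseteq S(G)$ with $|X|$ larger than the rank of $P$. Because $G$ is locally parabolic, $X$ is conjugate in $W$ to a subset $J \subseteq S$, and since conjugation is a bijection we have $|J| = |X|$; hence $\langle X \rangle$ is a parabolic subgroup of rank $|X|$ by Lemma~\ref{lem:canonical_generator_parabolic}, while $\langle X \rangle \leq G \leq P$. This contradicts the rank-monotonicity just established, so $S(G)$ must be finite and the rank $|S(G)|$ of $G$ is at most that of $P$. Since $G$ then has finite rank and is locally parabolic, taking $S(G)$ itself as the finite subset in Definition~\ref{defn:locally_parabolic} shows that $S(G)$ is conjugate to a subset of $S$, whence $G = \langle S(G) \rangle$ is parabolic of rank not larger than that of $P$, proving the first assertion.

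Finally, for the second assertion, let $X \subseteq W$ lie in some parabolic subgroup of finite rank. By the claim~\ref{item:cor_intersection_parabolic_finite_rank_3} of Corollary~\ref{cor:intersection_parabolic_finite_rank}, the parabolic closure $P(X)$ is the unique minimal parabolic subgroup of $W$ containing $X$ and has finite rank. Since $P(X)$ is in particular locally parabolic and contains $X$, we have $LP(X) \subseteq P(X)$; thus $LP(X)$ is a locally parabolic subgroup contained in the finite-rank parabolic subgroup $P(X)$, so by the first assertion $LP(X)$ is itself parabolic. Being a parabolic subgroup containing $X$, the minimality of $P(X)$ yields $P(X) \subseteq LP(X)$, and hence $LP(X) = P(X)$. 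I expect the main obstacle to be the finiteness-of-rank step, where local parabolicity must be converted into genuine finite-rank parabolic subgroups lying inside $P$ and then combined with the Franzsen--Howlett rank bound.
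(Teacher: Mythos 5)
Your proposal is correct and follows essentially the same route as the paper: both prove the first assertion by assuming $S(G)$ is infinite, extracting a finite subset $X \subseteq S(G)$ with $|X| > |S(P)|$ whose span is parabolic of rank $|X|$ inside $P$, and deriving a contradiction from the Franzsen--Howlett rank bound (Lemma~\ref{lem:Fra-How}), then deduce the second assertion from the first together with claim~\ref{item:cor_intersection_parabolic_finite_rank_3} of Corollary~\ref{cor:intersection_parabolic_finite_rank}. Your only additions are to spell out explicitly the rank-monotonicity step and the minimality argument giving $P(X) \subseteq LP(X)$, both of which the paper leaves implicit.
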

\begin{proof}
The second part follows from the first part and the fact (by Corollary \ref{cor:intersection_parabolic_finite_rank}) that $P(X)$ is now a parabolic subgroup of finite rank.
For the first part, note that $|S(P)| < \infty$ by the assumption.
We show that $S(G)$ is also finite, which implies that $G$ is parabolic (by the definition of locally parabolic subgroups) and $|S(G)| \leq |S(P)|$ (by Lemma \ref{lem:Fra-How}), concluding the proof.
Assume for contrary that $|S(G)| = \infty$.
Then there is a finite subset $X$ of $S(G)$ such that $|X| > |S(P)|$.
As $G$ is locally parabolic, $\langle X \rangle$ is parabolic of rank $|X|$ greater than the rank $|S(P)|$ of $P$, while $\langle X \rangle \leq P$.
This contradicts Lemma \ref{lem:Fra-How}.
Hence the claim holds.
\end{proof}

In the rest of this section, we give an example to show that the relation $LP(X) = P(X)$ in the second part of Lemma \ref{lem:two_closures} does not necessarily hold when the assumption on $X$ is relaxed.
More precisely, we construct a locally parabolic subgroup $G$ of a Coxeter group $W$ (hence $LP(G) = G$) such that $G \neq W$ and $P(G) = W$, therefore $LP(G) \neq P(G)$.
In this case, the parabolic closure $P(G)$ does not provide much information on $G$, as $P(G) = W$.
This example and the above fact that locally parabolic closures are always locally parabolic would suggest an interpretation that locally parabolic closure is in fact more natural notion of closure than parabolic closure.
\begin{example}
\label{ex:two_closures_differ}
Let $(W,S)$ be an irreducible Coxeter system such that $S = \{s_i \mid 1 \leq i \in \mathbb{Z}\}$, $m(s_i,s_j) = 2$ if $|i-j| \geq 2$, and $m(s_i,s_j)$ is either $\infty$ or an even number larger than $2$ if $|i-j| = 1$.
Write $\alpha_i = \alpha_{s_i}$ ($i \geq 1$) for simplicity.
Put
\begin{equation}
\gamma_i = u_i \cdot \alpha_i \in \Phi \mbox{ for each } i \geq 1 \,,\, \mbox{where } u_i = s_1 s_2 \cdots s_{i+1} \enspace,
\end{equation}
and put $\Psi = \{\gamma_i \mid i \geq 1\}$ and $X = \{s_{\gamma_i} \mid i \geq 1\}$.
Moreover, for each $i \geq 1$, put $\Psi_i = \{\gamma_j \mid 1 \leq j \leq i\}$ and $X_i = \{s_{\gamma_j} \mid 1 \leq j \leq i\}$.
We show that the reflection subgroup $G$ generated by $X$ is the desired example.

First, note that each $\gamma_i$ is a positive root, as $\alpha_{i+1} \in \mathrm{supp}_+(\gamma_i)$ by the definition.
Secondly, we have
\begin{equation}
\label{eq:example_closure_relation}
\gamma_i = u_j \cdot \alpha_i \mbox{ for any } j \geq i \geq 1 \enspace,
\end{equation}
therefore $u_i^{-1} \cdot \Psi_i = \{\alpha_j \mid 1 \leq j \leq i\} \subseteq \Pi$ for each $i \geq 1$.
Now the same argument as the one following property (\ref{eq:counterexample_relation}) in Example \ref{ex:counterexample} implies that $S(G) \subseteq X$.
Moreover, we have $S(G) = X$; indeed, if $s_{\gamma_i} \in X \setminus S(G)$, then Lemma \ref{lem:simple_root_to_negative} implies that $s_{\gamma_i} \cdot \gamma_j \in \Phi^-$ for some $s_{\gamma_j} \in S(G)$.
However, this is impossible, as we have $\langle \gamma_i,\gamma_j \rangle = \langle \alpha_i,\alpha_j \rangle \leq 0$ by (\ref{eq:example_closure_relation}).
Hence $G$ is a Coxeter group with $S(G) = X$.
(Note that the relation $\langle \gamma_i,\gamma_j \rangle = \langle \alpha_i,\alpha_j \rangle$ implies that $(G,X)$ is isomorphic to $(W,S)$.)
Moreover, the property (\ref{eq:example_closure_relation}) implies that each $X_i \subseteq X$ is conjugate to $\{s_j \mid 1 \leq j \leq i\} \subseteq S$, therefore $G$ is locally parabolic.
This $G$ does not contain $s_1$, therefore $G \neq W$.
Indeed, if $s_1 \in G$, then we have $\alpha_1 \in \Pi \cap \Phi(G) \subseteq \Pi(G) = \Psi$ by Lemma \ref{lem:generator_inclusion}, while $\alpha_{i+1} \in \mathrm{supp}(\gamma_i)$ for each $\gamma_i \in \Psi$, a contradiction.

The remaining task is to show that $P(G) = W$.
Let $w W_I w^{-1}$ be any parabolic subgroup of $W$ containing $G$.
Then for each $\gamma_i \in \Psi$, we have $w^{-1} s_{\gamma_i} w \in w^{-1} G w \subseteq W_I$, therefore $s_{w^{-1} \cdot \gamma_i} \in W_I$ and $\mathrm{supp}(w^{-1} \cdot \gamma_i) \subseteq I$ by Lemma \ref{lem:roots_for_parabolic}.
Now by Lemma \ref{lem:root_and_odd_graph} and the definition of $\gamma_i$, the set $\mathrm{supp}(w^{-1} \cdot \gamma_i)$ intersects with the connected component of $\Gamma^{\mathrm{odd}}$ containing $s_i$.
By the definition of $(W,S)$, this connected component consists of $s_i$ only, therefore we have $s_i \in \mathrm{supp}(w^{-1} \cdot \gamma_i) \subseteq I$.
Hence we have $I = S$ and $w W_I w^{-1} = W$.
This implies that $P(G) = W$, as desired.
\end{example}

\section{On locally finite subgroups of Coxeter groups}
\label{sec:locally_finite}

In this section, we give generalizations of two well-known facts on finite subgroups and parabolic subgroups in Coxeter groups, to the case of locally parabolic subgroups instead of parabolic ones.
One of the original facts is the following, which was originally proven by Tits for finite rank case (see e.g., \cite[Theorem 4.5.3]{BB_book} for a proof) and then easily extended to arbitrary rank case (see e.g., Lemma \ref{lem:finite_subset_in_finite_rank}):
\begin{lemma}
[Tits]
\label{lem:Tits}
Each finite subgroup of $W$ is contained in a finite parabolic subgroup of $W$.
\end{lemma}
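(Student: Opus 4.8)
The plan is to reduce the arbitrary-rank statement to the finite-rank case of Tits' theorem, which is already available from \cite[Theorem 4.5.3]{BB_book}, by first confining the given finite subgroup inside a standard parabolic subgroup of finite rank via Lemma \ref{lem:finite_subset_in_finite_rank}. Concretely, I would let $H$ be a finite subgroup of $W$. Since $H$ is in particular finitely generated, Lemma \ref{lem:finite_subset_in_finite_rank} yields a finite subset $I \subseteq S$ with $H \leq W_I$, so that $(W_I, I)$ is a Coxeter system of finite rank in which $H$ sits as a finite subgroup.

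Having landed inside a finite-rank Coxeter group, I would next invoke the finite-rank version of the theorem: applying Tits' result inside $W_I$, the finite subgroup $H$ is contained in a finite parabolic subgroup $P$ of $W_I$, which may be written as $P = v W_J v^{-1}$ for some $v \in W_I$ and $J \subseteq I$. The only remaining point is then to verify that $P$ is still parabolic and finite when viewed inside the ambient group $W$. Since $J \subseteq I \subseteq S$ and $v \in W_I \leq W$, the subgroup $P = v W_J v^{-1}$ is by definition a parabolic subgroup of $W$ as well, and finiteness of $P$ is an intrinsic property of the abstract group, independent of the ambient Coxeter system. Hence $P$ is a finite parabolic subgroup of $W$ containing $H$, as desired.

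The main obstacle here is essentially conceptual rather than technical: all the substantive work is carried by the finite-rank theorem of Tits, and the only thing one must check with care is that a parabolic subgroup of the standard parabolic $W_I$ remains parabolic in $W$ --- which is immediate, since the conjugation by $v \in W_I$ takes place within $W$ and $J$ is a subset of $S$. There is no genuinely difficult step; the lemma amounts to a routine transfer of the finite-rank result across the reduction furnished by Lemma \ref{lem:finite_subset_in_finite_rank}.
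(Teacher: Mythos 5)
Your proposal is correct and follows exactly the route the paper indicates: the paper states that the lemma is Tits' finite-rank result (citing \cite[Theorem 4.5.3]{BB_book}) ``easily extended to arbitrary rank'' via Lemma \ref{lem:finite_subset_in_finite_rank}, which is precisely your reduction of $H$ into a finite-rank standard parabolic $W_I$ followed by the observation that a parabolic subgroup $v W_J v^{-1}$ of $W_I$ (with $v \in W_I$, $J \subseteq I \subseteq S$) is also parabolic in $W$. Nothing is missing; you have simply written out the details the paper leaves implicit.
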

Another original fact is the following, which is well-known for experts especially in the finite rank case but seems rarely specified in preceding works explicitly:
\begin{proposition}
\label{prop:Coxeter_maximal_finite_subgroup}
Suppose that for each subset $I$ of $S$ such that $W_I$ is finite, there exists a subset $J$ of $S$ containing $I$ which is maximal subject to the condition $|W_J| < \infty$ (for example, this assumption is satisfied when $W$ has finite rank).
Then each finite subgroup of $W$ is contained in a maximal finite subgroup of $W$, which is parabolic.
\end{proposition}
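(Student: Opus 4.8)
The plan is to reduce the statement to the corresponding assertion for finite \emph{parabolic} subgroups, and then to produce, using the hypothesis on maximal spherical subsets, a finite parabolic subgroup that is maximal among finite parabolic subgroups.

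First I would record the easy half. By Lemma~\ref{lem:Tits} every finite subgroup of $W$ lies in a finite parabolic subgroup, so a finite parabolic subgroup $P$ is automatically a maximal finite subgroup once it is maximal among finite parabolic subgroups: indeed, if $P \subseteq K$ with $K$ finite, then $K \subseteq Q$ for some finite parabolic $Q$ by Lemma~\ref{lem:Tits}, whence $P \subseteq Q$, and maximality of $P$ among finite parabolic subgroups forces $P = Q \supseteq K$, so $K = P$. Thus it suffices to show that every finite subgroup $H$ is contained in a finite parabolic subgroup that is maximal among finite parabolic subgroups. Applying Lemma~\ref{lem:Tits} to $H$ gives $H \subseteq w W_I w^{-1}$ with $W_I$ finite; the hypothesis then furnishes $J \supseteq I$ maximal subject to $|W_J| < \infty$, and I would take $P := w W_J w^{-1}$, a finite parabolic subgroup containing $H$.

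It then remains to prove that $P$ is maximal among finite parabolic subgroups. Suppose $P \subseteq Q$ with $Q = x W_M x^{-1}$ finite parabolic. Conjugating, $g W_J g^{-1} \subseteq W_M$ where $g = x^{-1} w$, so Lemma~\ref{lem:Fra-How} yields $g W_J g^{-1} = W_M \cap g W_J g^{-1} = u W_N u^{-1}$ with $N \subseteq M$ and $u \in W_M$; comparing ranks via Lemma~\ref{lem:canonical_generator_parabolic} gives $|N| = |J|$. If $N = M$, then $g W_J g^{-1} = W_M$, hence $P = Q$, as desired. So I must rule out $N \subsetneq M$. In that case, setting $z = g^{-1} u$ gives $z W_N z^{-1} = W_J$, and choosing any $t \in M \setminus N$ produces $W_J = z W_N z^{-1} \subsetneq z W_{N \cup \{t\}} z^{-1}$, a finite parabolic subgroup of rank $|J| + 1$ properly containing the standard parabolic subgroup $W_J$. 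To contradict the maximality of $J$, I would then need to convert this into an enlargement of $J$ \emph{inside} $S$, that is, to find $s \in S \setminus J$ with $W_{J \cup \{s\}}$ finite.

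This conversion is the main obstacle. The difficulty is that the larger finite parabolic subgroup $z W_{N \cup \{t\}} z^{-1}$ containing $W_J$ need not be standard, while Lemma~\ref{lem:Fra-How} only standardizes \emph{intersections} relative to a prescribed standard subgroup, so it does not by itself relate the subset $J$ to a larger spherical subset of $S$. To handle this I would first reduce to finite rank: the finite group $z W_{N \cup \{t\}} z^{-1}$ lies in $W_F$ for some finite $F \subseteq S$ by Lemma~\ref{lem:finite_subset_in_finite_rank}, and then $J \subseteq F$ and all the groups in question live inside the finite-rank Coxeter system $(W_F, F)$, where Corollary~\ref{cor:intersection_parabolic_finite_rank} and the classical structure theory of finite-rank Coxeter groups are available. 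Within $W_F$ the required step is precisely the well-known finite-rank assertion that a spherical subset maximal in $F$ generates a maximal finite (parabolic) subgroup of $W_F$; equivalently, that two conjugate standard parabolic subgroups can be matched by an element carrying one simple system onto the other, so that a finite parabolic subgroup strictly containing $W_J$ is realized by enlarging $J$ within $F \subseteq S$. Feeding this back shows that $J$ is not maximal subject to $|W_J| < \infty$, the desired contradiction; hence $N = M$, $P = Q$, and $P$ is maximal among finite parabolic subgroups. I expect essentially all the genuine work to be concentrated in this finite-rank standardization step, the remaining parts being formal consequences of Lemma~\ref{lem:Tits} and Lemma~\ref{lem:Fra-How}.
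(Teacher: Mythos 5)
Your outer reduction is sound: the observation that, via Lemma~\ref{lem:Tits}, maximality among finite \emph{parabolic} subgroups implies maximality among all finite subgroups is correct; the candidate $P = w W_J w^{-1}$ is exactly the paper's; and the bookkeeping with Lemma~\ref{lem:Fra-How} and the rank comparison from Lemma~\ref{lem:canonical_generator_parabolic} in the case $N = M$ is fine. The genuine gap sits exactly where you say ``essentially all the genuine work'' is: the conversion of a finite parabolic subgroup strictly containing $W_J$ into an enlargement $J \cup \{s\}$ with $s \in S \setminus J$ and $W_{J \cup \{s\}}$ finite. This is not a routine standardization, and the reformulation you offer as ``equivalent'' --- that conjugate standard parabolic subgroups are matched by an element carrying one simple system onto the other (which indeed follows from Lemma~\ref{lem:canonical_generator_parabolic} together with Lemma~\ref{lem:roots_for_parabolic}) --- does \emph{not} yield it. Applying that matching fact, or a shortest-double-coset argument, to your configuration $W_J \subsetneq z W_{N \cup \{t\}} z^{-1}$ only produces a reflection $r$ (a conjugate $z t z^{-1}$ of a simple reflection, or a non-simple root in $\Pi(zW_{N\cup\{t\}}z^{-1}) \setminus \Pi_J$) such that $\langle W_J, r \rangle$ is finite; the obstacle that $r \notin S$ persists unchanged. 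Passing to $W_F$ does not help either: the difficulty is identical in finite rank. So the ``well-known finite-rank assertion'' is not implied by the tools you assemble around it; it is precisely the statement that a maximal spherical subset generates a maximal finite subgroup.

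That statement is the paper's Lemma~\ref{lem:Fra-How_max_finite_standard}, quoted from Franzsen and Howlett \cite[Lemma~8]{FH01} and stated (with no finite-rank hypothesis) immediately before the proposition for exactly this purpose. Given it, the paper's proof is two lines: $G \leq w W_I w^{-1}$ with $W_I$ finite by Lemma~\ref{lem:Tits}; enlarge $I$ to a maximal spherical $J$ by the hypothesis; then $W_J$ is a maximal finite subgroup of $W$, hence so is its conjugate $w W_J w^{-1}$, which is parabolic and contains $G$. Your proposal re-derives the easy outer layer of this argument, but at its core it assumes the quoted lemma without proof (and without recognizing it as the citable ingredient), while suggesting a route to it that does not work. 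To repair it, either invoke Lemma~\ref{lem:Fra-How_max_finite_standard} directly --- after which your intersection apparatus with Lemma~\ref{lem:Fra-How} becomes unnecessary --- or supply an actual proof of the conversion step, which is where the real content of the proposition lies.
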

Here we include a proof of this proposition for completeness.
The key property is the following result given by Franzsen and Howlett \cite[Lemma 8]{FH01}:
\begin{lemma}
[Franzsen--Howlett]
\label{lem:Fra-How_max_finite_standard}
If a subset $I \subseteq S$ satisfies that $W_I$ is finite and $I$ is maximal subject to the condition $|W_I| < \infty$, then $W_I$ is a maximal finite subgroup of $W$.
\end{lemma}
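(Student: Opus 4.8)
The plan is to show that no finite subgroup properly contains $W_I$. So let $H$ be a finite subgroup with $W_I \leq H$; the goal is $H = W_I$. By Tits' Lemma \ref{lem:Tits}, $H$ lies in some finite parabolic subgroup, so $H$ is contained in a finite-rank parabolic subgroup, and hence by Corollary \ref{cor:intersection_parabolic_finite_rank}\ref{item:cor_intersection_parabolic_finite_rank_3} the parabolic closure $P := P(H)$ is the unique minimal parabolic subgroup containing $H$ and is again finite. Thus $W_I \leq H \leq P$ with $P$ a finite parabolic subgroup, and it suffices to prove $P = W_I$, since then $H = W_I$. I would assume for contradiction that $W_I \subsetneq P$ and aim to contradict the maximality of $I$.

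The key structural step I would establish first is that $\Pi_I \subseteq \Pi(P)$. Indeed, for $s \in I$ we have $\alpha_s \in \Phi(W_I) = \Phi_I \subseteq \Phi(P)$ by Lemma \ref{lem:roots_for_parabolic}, and $\alpha_s \in \Phi^+$; since a simple root of $(W,S)$ is never a nontrivial nonnegative combination of positive roots, the defining property of $\Pi(P)$ forces $\alpha_s \in \Pi(P)$. Consequently $I \subseteq S(P)$, so $W_I$ is a standard parabolic subgroup of the finite Coxeter system $(P, S(P))$ with $I \subsetneq S(P)$. Writing $P = w W_J w^{-1}$ with $w = w^J$ the minimal coset representative (Lemma \ref{lem:canonical_generator_parabolic}), so that $\Pi(P) = w \cdot \Pi_J$, the inclusion $\Pi_I \subseteq w \cdot \Pi_J$ yields a proper subset $J_0 \subsetneq J$ with $w \cdot \Pi_{J_0} = \Pi_I$; in particular $w \cdot \alpha_j \in \Pi$ for every $j \in J_0$.

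The point of this reduction is that if the conjugator can be taken trivial, we are done: $P = W_J$ standard with $I \subsetneq J$ and $W_J$ finite directly contradicts the maximality of $I$. So the remaining task is to show $w$ may be replaced by a shorter conjugator while preserving the containment $W_I \leq P$, and by induction on $\ell(w)$ reach the standard case. If $\ell(w) > 0$, I would take a descent $s \in S$ with $w^{-1}\cdot\alpha_s \in \Phi^-$; using $w\cdot\alpha_{j} \in \Phi^+$ for $j \in J$ and $w\cdot\Pi_{J_0} = \Pi_I$, one checks $s \notin I$ and (comparing cosets via $w^{-1} s w = s_{w^{-1}\cdot\alpha_s}$) that $s \notin P$, so that $sPs$ is a finite parabolic subgroup with a strictly shorter minimal conjugator. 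Alternatively, applying the sharpened Lemma \ref{lem:Fra-How} to $W_J \cap w^{-1} W_I w = w^{-1} W_I w$ shows that $W_I$ is $W$-conjugate to a standard parabolic subgroup $W_M$ with $M \subsetneq J$ and $W_J$ finite, so $W_M$ is a conjugate of $W_I$ that fails to be maximal finite standard; the task is then to transport this non-maximality back to $W_I$ itself.

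The step I expect to be the main obstacle is exactly this transport, i.e. standardizing $P$ while keeping $W_I$ in standard position (equivalently, lowering $\ell(w)$ without breaking $W_I \leq P$). The difficulty is that a descent $s$ of $w$ need not commute with $W_I$, so replacing $P$ by $sPs$ generally destroys the inclusion $W_I \leq sPs$, and conjugating $W_I$ into the standard $W_M$ above moves one away from $I$ rather than toward a standard superset of $I$. My plan to overcome this is to choose a conjugator of minimal length subject to $W_I \leq \cdot\, W_J\, \cdot^{-1}$ and to exploit $\Pi_I \subseteq \Pi(P)$ together with the \textbf{Moreover} clause of Lemma \ref{lem:Fra-How}: the constraint $w\cdot\Pi_{J_0} = \Pi_I \subseteq \Pi$ should force every descent of a minimal conjugator to interact only with $\Pi(P) \setminus \Pi_I$, so that a reduction step can be performed entirely within the finite group $P$, ultimately producing a standard finite parabolic subgroup strictly containing $W_I$ and contradicting maximality.
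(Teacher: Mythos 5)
First, a point of reference: the paper does not prove this lemma at all --- it is quoted from Franzsen--Howlett \cite[Lemma 8]{FH01} and used as a black box in the proof of Proposition \ref{prop:Coxeter_maximal_finite_subgroup}. So your proposal can only be judged as a free-standing proof, and as such it is incomplete. The parts you do carry out are correct: by Lemma \ref{lem:Tits} and Corollary \ref{cor:intersection_parabolic_finite_rank}, any finite $H \supseteq W_I$ lies in a finite parabolic $P = P(H)$, so the lemma reduces to showing that a finite parabolic $P \supseteq W_I$ equals $W_I$; the observation $\Pi_I \subseteq \Pi(P)$ is valid (a simple root of $(W,S)$ is indecomposable among positive roots, so it satisfies the defining condition of $\Pi(P)$); and via Lemma \ref{lem:canonical_generator_parabolic}, writing $P = w W_J w^{-1}$ with $w = w^J$, you correctly get $w^{-1} \cdot \Pi_I = \Pi_{J_0}$ with $J_0 \subsetneq J$ whenever $W_I \subsetneq P$, so that the case $w = 1$ contradicts maximality of $I$.

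The gap is the step you yourself flag as ``the main obstacle'': forcing $w = 1$. This is not a technicality to be smoothed over --- given the easy Tits reduction, the lemma is \emph{equivalent} to the statement that no finite parabolic properly contains $W_I$, so the transport step is the whole content, and your proposal offers only a plan (``should force'', ``ultimately producing'') in its place. Moreover, the plan is provably unworkable as described. Take $\ell(w)$ minimal subject to $W_I \leq w W_J w^{-1}$ with $W_J$ finite, and let $s$ be any descent of $w$, i.e.\ $w^{-1} \cdot \alpha_s \in \Phi^-$. Then indeed $s \notin I$ (since $w^{-1} \cdot \Pi_I \subseteq \Pi$) and $s \notin P$ (if $s \in P$ then $P = sPs = (sw) W_J (sw)^{-1}$ contains $W_I$ with a strictly shorter conjugator); but minimality applied once more shows $W_I \not\leq sPs = (sw) W_J (sw)^{-1}$, because otherwise $sw$ would again be a shorter admissible conjugator. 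So the descent move does not ``generally'' destroy the inclusion $W_I \leq P$ --- under your own minimality hypothesis it \emph{always} does. On the other side, a ``reduction step performed entirely within the finite group $P$'' is also impossible: conjugators obtained from $w$ by elements of $P$ sweep exactly the coset $Pw = wW_J$, whose minimal element is $w$ itself. Hence neither of your two mechanisms can shorten the conjugator, and a genuinely different input is required --- for instance Franzsen and Howlett's own argument, or a geometric one via the Tits cone (a point fixed exactly by $W_I$ and a point fixed by $P$ both have finite stabilizers; moving from the former toward the latter until a new reflection wall is first met yields a point whose finite stabilizer properly contains $W_I$ and shares a closed chamber with the first point, hence is of the form $u W_K u^{-1}$ with $u \in W_I$ and $I \subsetneq K$, contradicting maximality). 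Machinery of this kind lies outside the purely root-combinatorial toolkit you (and the paper) set up, which is presumably why the author cites \cite{FH01} rather than reproving the lemma.
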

\begin{proof}
[Proof of Proposition \ref{prop:Coxeter_maximal_finite_subgroup}]
Let $G$ be a finite subgroup of $W$.
By Lemma \ref{lem:Tits}, there is a finite parabolic subgroup $P$ of $W$ such that $G \leq P$.
Write $P = w W_I w^{-1}$ with $I \subseteq S$ and $w \in W$.
Then $W_I$ is finite as well as $P$, therefore by the assumption, there is a subset $J \subseteq S$ containing $I$ such that $W_J$ is finite and $J$ is maximal subject to the condition $|W_J| < \infty$.
By Lemma \ref{lem:Fra-How_max_finite_standard}, $W_J$ is a maximal finite subgroup of $W$, so is $w W_J w^{-1}$.
Moreover, $w W_J w^{-1}$ is parabolic and contains $G$.
Hence the claim holds.
\end{proof}
From now, we give generalizations of Lemma \ref{lem:Tits} and Proposition \ref{prop:Coxeter_maximal_finite_subgroup}.
First, our generalization of Lemma \ref{lem:Tits} is the following:
\begin{theorem}
\label{thm:lp_closure_is_locally_finite}
Suppose that there exists a well-ordering $\preceq$ on $S$.
Let $G$ be a locally finite subgroup of $W$.
Then its locally parabolic closure $LP(G)$ is locally finite.
Hence $G$ is contained in a locally finite subgroup of $W$ which is locally parabolic.
\end{theorem}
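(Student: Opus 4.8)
The plan is to reduce the statement to the construction of a single convenient overgroup. Concretely, it suffices to exhibit a subgroup $H$ of $W$ that is simultaneously locally finite and locally parabolic and satisfies $G \subseteq H$. Indeed, since $LP(G)$ is by definition the smallest locally parabolic subgroup containing $G$ (Theorem \ref{thm:intersection_locally_parabolic}), such an $H$ forces $LP(G) \subseteq H$; and as any subgroup of a locally finite group is locally finite, $LP(G)$ is then locally finite. This also yields the final assertion, with $H$ itself as the promised locally finite, locally parabolic overgroup of $G$.

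To build $H$, I would work with parabolic closures of finitely generated subgroups of $G$. For each finite subset $F \subseteq G$, local finiteness of $G$ makes $\langle F \rangle$ a finite group, so by Lemma \ref{lem:Tits} it lies in a finite parabolic subgroup; Corollary \ref{cor:intersection_parabolic_finite_rank}\,(\ref{item:cor_intersection_parabolic_finite_rank_3}) then shows that $P(\langle F \rangle)$ is a genuine parabolic subgroup of finite rank. I would set $H = \bigcup_F P(\langle F \rangle)$, the union taken over all finite $F \subseteq G$. The crucial point is that this family is directed: given $F_1, F_2$, the set $F_3 = F_1 \cup F_2$ is again finite, $\langle F_3 \rangle$ is finite by local finiteness, and monotonicity of parabolic closure (immediate from the uniqueness in Corollary \ref{cor:intersection_parabolic_finite_rank}\,(\ref{item:cor_intersection_parabolic_finite_rank_3})) gives $P(\langle F_i \rangle) \subseteq P(\langle F_3 \rangle)$ for $i = 1,2$. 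Hence $H$ is a directed union of reflection subgroups, so it is itself a reflection subgroup, it contains $G$ (as $g \in P(\langle \{g\} \rangle)$), and it is locally finite, since any finite subset of $H$ is contained in a single finite $P(\langle F \rangle)$.

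It then remains to verify that $H$ is locally parabolic. Let $X$ be a finite subset of the canonical generating set $S(H)$. By directedness, $X \subseteq P(\langle F \rangle)$ for some finite $F \subseteq G$, whence $X \subseteq S(H) \cap P(\langle F \rangle) \subseteq S(P(\langle F \rangle))$ by Lemma \ref{lem:generator_inclusion} applied to $P(\langle F \rangle) \leq H$. Since $P(\langle F \rangle)$ is parabolic, Lemma \ref{lem:canonical_generator_parabolic} shows that $S(P(\langle F \rangle))$, and therefore $X$, is conjugate in $W$ to a subset of $S$. Thus $H$ is locally parabolic, completing the reduction.

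The step I expect to require the most care is the directedness of the family $\{P(\langle F \rangle)\}$ together with the local-parabolicity check, both of which hinge on keeping everything inside finite-rank parabolic subgroups so that Lemma \ref{lem:generator_inclusion} and Lemma \ref{lem:canonical_generator_parabolic} apply verbatim; local finiteness of $G$ is exactly what guarantees that $\langle F_1 \cup F_2 \rangle$ stays finite. I would also note that this directed-union construction is choice-free: the assignment $F \mapsto P(\langle F \rangle)$ is canonical, so no selection is needed. Consequently I expect the well-ordering hypothesis to be inessential to this particular theorem along this route, and to be carried along mainly for the transfinite construction needed in the companion result on maximal locally finite subgroups.
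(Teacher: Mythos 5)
Your proof is correct, but it follows a genuinely different route from the paper. The paper well-orders $W$ (via the hypothesis on $S$ and injection into $S^{<\omega}$), enumerates $G$ by a cardinal $\kappa$, and runs a transfinite induction: the finite case is settled by Lemma \ref{lem:Tits} together with Lemma \ref{lem:two_closures}, and the limit case takes the union $\bigcup_{\alpha<\kappa} LP(G_\alpha)$ along the well-ordered chain, invoking Lemma \ref{lem:ascending_locally_parabolic}. You instead build the single overgroup $H=\bigcup_F P(\langle F\rangle)$ over the canonical directed system of finite subsets $F\subseteq G$, using local finiteness of $G$, Lemma \ref{lem:Tits}, and Corollary \ref{cor:intersection_parabolic_finite_rank}(\ref{item:cor_intersection_parabolic_finite_rank_3}) to make each piece a finite parabolic subgroup, and then check local parabolicity of $H$ with Lemmas \ref{lem:generator_inclusion} and \ref{lem:canonical_generator_parabolic} exactly as the paper checks it for its unions; note that your directed union is a mild generalization of Lemma \ref{lem:ascending_locally_parabolic} (stated there only for totally ordered families), but your inline justifications cover it since any finitely many members of your family have a common upper bound. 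What your route buys is substantial: the indexing set (finite subsets of $G$) is canonical and the map $F\mapsto P(\langle F\rangle)$ is definable, so no enumeration of $G$ and no transfinite induction are needed, and only finitely many choices occur at any point; consequently the well-ordering hypothesis on $S$ really is dispensable for this theorem, which strengthens the statement and shows that the claim in Remark \ref{rem:AC} (that dropping the hypothesis would require AC) is too pessimistic here — though that remark, and the hypothesis, remain pertinent for Theorem \ref{thm:maximal_locally_finite_subgroup}, whose Zorn-type construction of maximal locally finite subgroups is where the well-ordering does genuine work. What the paper's route buys in exchange is mainly structural economy: its transfinite-induction template and Lemma \ref{lem:ascending_locally_parabolic} are reused verbatim in the proof of Theorem \ref{thm:maximal_locally_finite_subgroup}.
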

\begin{remark}
\label{rem:AC}
A proof of Theorem \ref{thm:lp_closure_is_locally_finite} without the assumption on $S$ would require Axiom of Choice (AC).
The technical assumption that there exists a well-ordering $\preceq$ on $S$ is introduced for the purpose of clarifying in which part we need AC (the proof of Theorem \ref{thm:lp_closure_is_locally_finite} below itself does not use AC).
Note that this assumption may be frequently satisfied for a given concrete Coxeter system $(W,S)$.
A similar remark is also applied to Theorem \ref{thm:maximal_locally_finite_subgroup} below.
\end{remark}
In the proof of Theorem \ref{thm:lp_closure_is_locally_finite}, we use the following property:
\begin{lemma}
\label{lem:ascending_locally_parabolic}
Let $(\Lambda,\prec)$ denote a totally ordered set.
Let $G_{\lambda}$ be subgroups of $W$ indexed by elements $\lambda \in \Lambda$ in such a way that $G_{\lambda} \leq G_{\mu}$ whenever $\lambda \prec \mu$.
Put $G = \bigcup_{\lambda \in \Lambda} G_{\lambda}$.
\begin{enumerate}
\item \label{item:lem_ascending_lp_1}
$G$ is a subgroup of $W$.
\item \label{item:lem_ascending_lp_2}
If every $G_{\lambda}$ is a reflection subgroup, then so is $G$.
\item \label{item:lem_ascending_lp_3}
If every $G_{\lambda}$ is locally finite, then so is $G$.
\item \label{item:lem_ascending_lp_4}
If every $G_{\lambda}$ is locally parabolic, then so is $G$.
\end{enumerate}
\end{lemma}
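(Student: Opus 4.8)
The plan is to treat the four claims in order, exploiting throughout the single structural feature that the total order on $\Lambda$ allows one to replace the (possibly infinite) union by a single member $G_\mu$ whenever only finitely much data is involved. Claims \ref{item:lem_ascending_lp_1}--\ref{item:lem_ascending_lp_3} are the familiar ``ascending chain'' arguments and are essentially routine; claim \ref{item:lem_ascending_lp_4} is the one that needs genuine care.

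For claim \ref{item:lem_ascending_lp_1}, given $x,y \in G$ I would choose $\lambda,\mu \in \Lambda$ with $x \in G_\lambda$ and $y \in G_\mu$; total ordering gives (say) $\lambda \preceq \mu$, so $x,y \in G_\mu$ and hence $xy^{-1} \in G_\mu \subseteq G$, showing $G$ is a subgroup. For claim \ref{item:lem_ascending_lp_2}, every $w \in G$ lies in some $G_\lambda$, which is generated by the reflections $s_\gamma \in G_\lambda \subseteq G$ that it contains; thus $G$ is generated by the reflections it contains, so it is a reflection subgroup. For claim \ref{item:lem_ascending_lp_3}, a finite subset $F \subseteq G$ has each of its finitely many elements lying in some $G_\lambda$, and since $\Lambda$ is totally ordered the finitely many indices involved admit a maximum $\mu$; then $F \subseteq G_\mu$, and local finiteness of $G_\mu$ makes $\langle F \rangle$ finite.

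The crux is claim \ref{item:lem_ascending_lp_4}. Here one must resist the temptation to identify $S(G)$ with $\bigcup_{\lambda} S(G_\lambda)$: passing to the larger reflection subgroup $G$ can change which reflections are ``simple'', so no such identification need hold, and $S(G)$ is in general opaque. Instead, by claim \ref{item:lem_ascending_lp_2} the union $G$ is already a reflection subgroup, so it suffices to take an arbitrary finite subset $X \subseteq S(G)$ and show that $X$ is conjugate in $W$ to a subset of $S$. Since $X$ is finite and each of its elements lies in some $G_\lambda$, the total order again supplies a single index $\mu$ with $X \subseteq G_\mu$. Then $X \subseteq S(G) \cap G_\mu$, and Lemma \ref{lem:generator_inclusion} applied to $G_\mu \leq G$ gives $S(G) \cap G_\mu \subseteq S(G_\mu)$, whence $X \subseteq S(G_\mu)$. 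As $X$ is now a finite subset of the canonical generating set of the locally parabolic subgroup $G_\mu$, Definition \ref{defn:locally_parabolic} yields that $X$ is conjugate in $W$ to a subset of $S$, as required.

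The main obstacle is therefore conceptual rather than computational: the point is to recognize that Lemma \ref{lem:generator_inclusion} supplies the containment $S(G) \cap G_\mu \subseteq S(G_\mu)$ in precisely the direction needed to pull a finite subset of the unknown set $S(G)$ back into a single, well-understood $S(G_\mu)$, and that finiteness of $X$ together with the total order on $\Lambda$ is exactly what licenses working inside one $G_\mu$ at a time.
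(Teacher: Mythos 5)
Your proposal is correct and follows essentially the same route as the paper's own proof: the routine chain arguments for claims \ref{item:lem_ascending_lp_1}--\ref{item:lem_ascending_lp_3}, and for claim \ref{item:lem_ascending_lp_4} the same key step of placing a finite $X \subseteq S(G)$ inside a single $G_{\mu}$ and applying Lemma \ref{lem:generator_inclusion} to get $X \subseteq S(G)\cap G_{\mu} \subseteq S(G_{\mu})$, then invoking Definition \ref{defn:locally_parabolic} for $G_{\mu}$. Your explicit warning that $S(G)$ need not equal $\bigcup_{\lambda} S(G_{\lambda})$ is a sound clarification of why Lemma \ref{lem:generator_inclusion} is the right tool, exactly as the paper implicitly uses it.
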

\begin{proof}
(\ref{item:lem_ascending_lp_1})
This follows from the fact that any pair of $g_1,g_2 \in G$ is contained in some $G_{\lambda}$.

(\ref{item:lem_ascending_lp_2})
Each $g \in G$ belongs to some $G_{\lambda}$, therefore $g$ is a product of reflections contained in $G_{\lambda} \subseteq G$.

(\ref{item:lem_ascending_lp_3})
This follows from the fact that any finite subset of $G$ is contained in some $G_{\lambda}$.

(\ref{item:lem_ascending_lp_4})
First, $G$ is a reflection subgroup by the claim \ref{item:lem_ascending_lp_2}.
Let $X$ be a finite subset of $S(G)$.
Then $X \subseteq G_{\lambda}$ for some $\lambda \in \Lambda$.
Now we have $X \subseteq S(G) \cap G_{\lambda} \subseteq S(G_{\lambda})$ (see Lemma \ref{lem:generator_inclusion}), while $G_{\lambda}$ is locally parabolic, therefore $X$ is conjugate to a subset of $S$.
Hence $G$ is locally parabolic.
\end{proof}
\begin{proof}
[Proof of Theorem \ref{thm:lp_closure_is_locally_finite}]
First note that the well-ordering $\preceq$ on $S$ induces a well-ordering on the set $S^{<\omega}$ of all finite (possibly empty) sequences of elements of $S$, in such a way that $a \in S^{<\omega}$ is smaller than $b \in S^{<\omega}$ if and only if either the length of $a$ is shorter than $b$, or $a$ and $b$ have the same length and $a$ is smaller than $b$ with respect to the lexicographic order.
As there exists an obvious surjective map $S^{<\omega} \to W$, it follows (without Axiom of Choice) that there exists an injective map $W \to S^{<\omega}$, hence an injective map $G \to S^{<\omega}$.
This induces a well-ordering on $G$, therefore there exists the cardinal number $\kappa$ of $G$ and a bijection $\kappa \to G$, $\beta \mapsto g_{\beta}$ ($\beta < \kappa$).

By the previous paragraph, it holds that (*) there exists a cardinal number $\kappa$ and a (not necessarily bijective) map $\kappa \to G$, $\beta \mapsto g_{\beta}$ ($\beta < \kappa$) such that $G_{\kappa} = G$, where we put $G_{\alpha} = \langle \{g_{\beta} \mid \beta < \alpha\} \rangle \leq G$ for each ordinal number $\alpha \leq \kappa$.
We show that $LP(G)$ is locally finite, by transfinite induction on such a cardinal number $\kappa$.
If $\kappa$ is finite, then $G = G_{\kappa}$ is finitely generated as well as locally finite, therefore $G$ is finite.
Now we have $LP(G) = P(G)$ by Lemma \ref{lem:finite_subset_in_finite_rank} and Lemma \ref{lem:two_closures}, while $P(G)$ is finite by Lemma \ref{lem:Tits}.
Hence $LP(G)$ is locally finite, as desired.

From now, we consider the remaining case that $\kappa$ is (infinite and) a limit ordinal.
In this case, we have $G = G_{\kappa} = \bigcup_{\alpha < \kappa} G_{\alpha}$.
Indeed, any element $g \in G = G_{\kappa}$ is written as a product of finitely many elements $g_{\beta}$ with $\beta < \kappa$ and their inverses, therefore we have $g \in G_{\alpha + 1}$ and $\alpha + 1 < \kappa$ (as $\kappa$ is a limit ordinal) for $\alpha < \kappa$ being the maximum of the finitely many indices $\beta$.
Moreover, each subgroup $G_{\alpha}$ ($\alpha < \kappa$) of $G$ is locally finite as well as $G$, and this $G_{\alpha}$ also satisfies the condition (*) with the cardinal number $\kappa' \leq \alpha$ of $\alpha$ by taking composition of the above map $\beta \mapsto g_{\beta}$ and a bijection $\kappa' \to \alpha$.
Hence by the hypothesis of the transfinite induction, the locally parabolic closure $LP(G_{\alpha})$ of $G_{\alpha}$ is locally finite for every $\alpha < \kappa$.
As $(G_{\alpha})_{\alpha < \kappa}$ is a weakly ascending sequence, so is $(LP(G_{\alpha}))_{\alpha < \kappa}$.
By Lemma \ref{lem:ascending_locally_parabolic}, $L = \bigcup_{\alpha < \kappa} LP(G_{\alpha})$ is a locally finite and locally parabolic subgroup of $W$.
Moreover, we have $G = \bigcup_{\alpha < \kappa} G_{\alpha} \leq L$.
This implies that the locally parabolic closure $LP(G)$ of $G$ is a subgroup of $L$, which is locally finite as well as $L$.

Summarizing, $LP(G)$ is indeed locally finite in any case.
Hence the transfinite induction completes the proof of Theorem \ref{thm:lp_closure_is_locally_finite}.
\end{proof}

Secondly, we present a generalization of Proposition \ref{prop:Coxeter_maximal_finite_subgroup} as follows:
\begin{theorem}
\label{thm:maximal_locally_finite_subgroup}
Suppose that there exists a well-ordering $\preceq$ on $S$.
Then each locally finite subgroup of $W$ is contained in a maximal locally finite subgroup of $W$, which is locally parabolic.
\end{theorem}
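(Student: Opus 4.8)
The plan is to produce the desired subgroup by a greedy transfinite recursion that reuses the choice-free enumeration technique from the proof of Theorem \ref{thm:lp_closure_is_locally_finite}, and then to upgrade maximality to local parabolicity via that same theorem. First I would transport the well-ordering $\preceq$ on $S$ to a well-ordering of $W$: exactly as in the proof of Theorem \ref{thm:lp_closure_is_locally_finite}, $\preceq$ induces a well-ordering on $S^{<\omega}$, and composing the obvious surjection $S^{<\omega}\to W$ with the resulting injection $W\to S^{<\omega}$ gives (without Axiom of Choice) a well-ordering of $W$. Let $\kappa$ be its order type and write $W=\{w_{\beta}\mid \beta<\kappa\}$.

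Next I would build a weakly ascending chain $(M_{\alpha})_{\alpha\leq\kappa}$ of subgroups by transfinite recursion, starting from the given locally finite subgroup $G$. I would set $M_0=G$; at a successor ordinal I would put $M_{\alpha+1}=\langle M_{\alpha},w_{\alpha}\rangle$ if this subgroup is locally finite, and $M_{\alpha+1}=M_{\alpha}$ otherwise; and at a limit ordinal $\lambda$ I would take $M_{\lambda}=\bigcup_{\alpha<\lambda}M_{\alpha}$. Finally I set $M=M_{\kappa}$. A transfinite induction then shows that every $M_{\alpha}$ is locally finite: the successor step holds by construction, and the limit step holds because the union of a chain of locally finite subgroups is locally finite by Lemma \ref{lem:ascending_locally_parabolic}(\ref{item:lem_ascending_lp_3}). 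In particular $M$ is locally finite, and clearly $G=M_0\leq M$.

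To prove maximality I would argue as follows. The recursion rule is a definite, choice-free rule, since at each stage both $\langle M_{\alpha},w_{\alpha}\rangle$ and the predicate ``is locally finite'' are canonically determined. Now let $w\in W\setminus M$ and write $w=w_{\beta}$. Since $w_{\beta}\notin M_{\beta+1}$, the recursion must have rejected $w_{\beta}$ at step $\beta+1$, which forces $\langle M_{\beta},w_{\beta}\rangle$ to be \emph{not} locally finite. Because local finiteness passes to subgroups (immediate from the definition: a finite subset of a subgroup generates inside the ambient group a finite subgroup lying in the subgroup), any group containing $\langle M_{\beta},w_{\beta}\rangle$ fails to be locally finite; in particular $\langle M,w\rangle$ is not locally finite. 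Hence no locally finite subgroup of $W$ properly contains $M$, so $M$ is a maximal locally finite subgroup of $W$ containing $G$. Finally, by Theorem \ref{thm:lp_closure_is_locally_finite} the locally parabolic closure $LP(M)$ is locally finite, and since $M\leq LP(M)$, maximality forces $M=LP(M)$; thus $M$ is locally parabolic, as required.

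I expect the main obstacle to be twofold. The conceptual heart is the last step: it is Theorem \ref{thm:lp_closure_is_locally_finite} that guarantees that passing to the locally parabolic closure of a locally finite group remains within the locally finite world, and this is precisely what converts the purely order-theoretic ``maximal locally finite'' into the structural conclusion ``locally parabolic''. The more delicate bookkeeping is keeping the whole construction free of Axiom of Choice, so that the only input is the assumed well-ordering of $S$; this is why the recursion must be phrased by the explicit, canonical rule above rather than by an appeal to Zorn's lemma, and why I reuse the $S^{<\omega}$ enumeration to well-order $W$ in the first place.
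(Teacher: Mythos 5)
Your proof is correct, and its existence argument is genuinely different from the paper's. The paper proves existence by contradiction: assuming no maximal locally finite subgroup contains $G$, it builds a \emph{strictly} increasing chain $(H_\alpha)$ of locally finite subgroups indexed by arbitrarily large ordinals (at each successor step picking the $\preceq$-least element of $W$ whose adjunction preserves local finiteness, which exists by non-maximality), and then derives a contradiction by producing an injection $\beta \mapsto \min(H_{\beta+1}\setminus H_\beta)$ from an ordinal too large to inject into $W$ (a Hartogs-type bound, available in ZF). Your proof instead runs a direct greedy recursion along the enumeration $W=\{w_\beta \mid \beta<\kappa\}$, accepting $w_\alpha$ exactly when $\langle M_\alpha, w_\alpha\rangle$ stays locally finite, and extracts maximality directly from the rejection condition: if $w_\beta\notin M$ then $\langle M_\beta,w_\beta\rangle$ is not locally finite, hence neither is $\langle M,w_\beta\rangle$, since local finiteness passes to subgroups. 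This is cleaner in two respects: the recursion has an a priori bounded length $\kappa$, so no cardinality contradiction is needed, and the argument is a direct construction rather than a reductio. Both proofs share the same choice-free infrastructure (the well-ordering of $W$ induced via $S^{<\omega}$, Lemma \ref{lem:ascending_locally_parabolic} at limit stages) and the identical final step: Theorem \ref{thm:lp_closure_is_locally_finite} gives that $LP(M)$ is locally finite, so maximality forces $M=LP(M)$, which is locally parabolic. One small point worth making explicit in your write-up is that the set-theoretic legitimacy of defining $M_{\alpha+1}$ by cases rests on the recursion rule being a definite formula (no choices), which you do note; with that observed, your argument is complete and valid in ZF under the stated hypothesis on $S$.
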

\begin{proof}
Let $G$ be a locally finite subgroup of $W$.
First, if there exists a maximal locally finite subgroup $H$ of $W$ containing $G$, then by Theorem \ref{thm:lp_closure_is_locally_finite}, $LP(H)$ is also locally finite and $G \leq H \leq LP(H)$, therefore $LP(H) = H$ by the maximality of $H$ and $H$ is locally parabolic, as desired.
From now, we prove the existence of such a subgroup $H$.

By the same argument as the proof of Theorem \ref{thm:lp_closure_is_locally_finite}, the well-ordering on $S$ induces (without Axiom of Choice) a well-ordering on $W$.
Now assume for contrary that there does not exist a maximal locally finite subgroup of $W$ containing $G$.
Then we construct, by transfinite induction, locally finite subgroups $H_{\alpha} \leq W$ for ordinal numbers $\alpha$ such that $G \subseteq H_{\alpha} \subsetneq H_{\beta}$ for any $\alpha < \beta$.
We define $H_0 = G$.
Now suppose that $\alpha > 0$ is an ordinal number and such subgroups $H_{\beta}$ have been defined for all $\beta < \alpha$.
First we consider the case that $\alpha$ is a successor ordinal $\alpha_0 + 1$.
Then $H_{\alpha_0}$ is locally finite by the induction hypothesis.
On the other hand, by the assumption, $H_{\alpha_0}$ is not a maximal locally finite subgroup of $W$, therefore $H_{\alpha_0}$ is properly contained in some locally finite subgroup of $W$.
This implies that there is an element $g \in W \setminus H_{\alpha_0}$ such that $H_{\alpha_0} \cup \{g\}$ generates a locally finite subgroup.
We take the minimum (according to the above well-ordering on $W$) among such elements $g$, and define $H_{\alpha} = \langle H_{\alpha_0} \cup \{g\} \rangle$.
This $H_{\alpha}$ satisfies the desired conditions.
Secondly, we consider the other case that $\alpha$ is a limit ordinal.
We define $H_{\alpha}$ to be the union of all the subgroups $H_{\beta}$ with $\beta < \alpha$.
Then by the induction hypothesis and Lemma \ref{lem:ascending_locally_parabolic}, $H_{\alpha}$ is also a locally finite subgroup of $W$ containing $G$.
Moreover, we have $H_{\beta} \subsetneq H_{\beta + 1} \subseteq H_{\alpha}$ for every $\beta < \alpha$, as $\alpha$ is a limit ordinal.
Hence this $H_{\alpha}$ satisfies the desired conditions.
Therefore $H_{\alpha}$ has been successfully constructed in any case.

Now take a limit ordinal $\alpha$ such that there exists no injective map $\alpha \to W$ (for example, the supremum of the non-empty set of all ordinal numbers that are isomorphic to some well-ordering on $W$, which indeed exists under Zermelo-Fraenkel axioms, satisfies this condition).
Then we define a map $\varphi:\alpha \to W$ by $\varphi(\beta) = \min(H_{\beta + 1} \setminus H_{\beta})$ for $\beta < \alpha$.
Now the above properties of the subgroups $H_{\beta}$ imply that this $\varphi$ is well-defined and injective.
However, by the choice of $\alpha$, this is a contradiction.
Hence the proof of Theorem \ref{thm:maximal_locally_finite_subgroup} is concluded.
\end{proof}
Note that the structure of locally finite, locally parabolic subgroups appeared in the statements of Theorem \ref{thm:lp_closure_is_locally_finite} and Theorem \ref{thm:maximal_locally_finite_subgroup} is well restricted by Proposition \ref{prop:locally_finite_Coxeter} in Section \ref{sec:Coxeter_group}; namely, each irreducible component of such a subgroup is either finite or of one of the four types in Figure \ref{fig:locally_finite_Coxeter_group}.

\paragraph{Acknowledgements}
The author would like to thank the anonymous referee for his/her kind and careful review.

\end{document}